\renewcommand{\le}{\leqslant}
\renewcommand{\ge}{\geqslant}
\renewcommand{\leq}{\leqslant}
\renewcommand{\geq}{\geqslant}
\renewcommand{\emptyset}{\varnothing}
\newcommand{\tmod}{\ \mathsf{mod}\ }
\newcommand{\ints}{\mathbb{Z}}
\newcommand{\natu}{\mathbb{N}}
\newcommand{\bsc}{\boldsymbol{c}}
\newcommand{\bsk}{\boldsymbol{k}}
\newcommand{\bsu}{\boldsymbol{u}}
\newcommand{\bsx}{\boldsymbol{x}}
\newcommand{\rd}{\,\mathrm{d}}
\newcommand{\dunif}{\mathbb{U}}
\newcommand{\mc}{\mathrm{MC}}
\newcommand{\qmc}{\mathrm{QMC}}
\newcommand{\rqmc}{\mathrm{RQMC}}
\newcommand{\simiid}{\stackrel{\mathrm{iid}}\sim}
\newcommand{\e}{\mathbb{E}}
\newcommand{\var}{\mathrm{var}}
\newcommand{\vol}{\mathrm{vol}}
\newcommand{\rank}{\mathrm{rank}}
\newcommand{\one}{\mathbf{1}}
\newcommand{\bvhk}{\mathrm{BVHK}}
\newcommand{\phm}{\phantom{-}}
\newcommand{\tran}{\mathsf{T}}
\newtheorem{theorem}{Theorem}
\newtheorem{lemma}{Lemma}
\newtheorem{corollary}{Corollary}
\newtheorem{proposition}{Proposition}
\theoremstyle{definition}
\newtheorem{definition}{Definition}
\newtheorem{remark}{Remark}
\newtheorem{example}{Example}
\author{Zexin Pan\\Stanford University
\and
Art B. Owen\\Stanford University}
\title{The nonzero gain coefficients of Sobol's sequences are
always powers of two}
\date{June 2021}
\begin{document}
\maketitle
\begin{abstract}
When a plain Monte Carlo estimate on $n$ samples
has variance $\sigma^2/n$, then scrambled digital
nets attain a variance that is $o(1/n)$ as $n\to\infty$.
For finite $n$ and an adversarially selected integrand,
the variance of a scrambled $(t,m,s)$-net can
be at most $\Gamma\sigma^2/n$ for a maximal
gain coefficient $\Gamma<\infty$.
The most widely used digital nets and sequences
are those of Sobol'.
It was previously known that $\Gamma\le 2^t3^s$
for Sobol' points as well as Niederreiter-Xing points.
In this paper we study nets in base $2$.
We show that $\Gamma \le2^{t+s-1}$ for nets.
This bound is a simple, but apparently unnoticed,
consequence of a microstructure analysis in
Niederreiter and Pirsic (2001).  We obtain 
a sharper bound that is smaller 
than this for some digital nets. 
We also show that all nonzero gain coefficients must be powers of two.  A consequence of this latter fact
is a simplified algorithm for computing gain
coefficients of nets in base $2$.
\end{abstract}

\section{Introduction}

Numerical integration is a fundamental task in scientific computation.
In high dimensional problems, Monte Carlo (MC) methods are widely
used for integration because they are less affected by dimension than
classical methods, such as those in \cite{davrab}.
The MC problems we study are to compute
$\mu =\int_{[0,1]^s}f(\bsx)\rd\bsx$
for a dimension $s\ge1$
and most of our attention is on $f\in L^2[0,1]^s$.
This $\mu $ is the mathematical
expectation $\e(f(\bsx))$ for $\bsx\sim \dunif[0,1]^s$.
By using transformations from \cite{devr:1986}
we can greatly expand MC to expectations of
quantities with non-uniform distributions over
domains other than the unit cube, and so for this
paper it suffices to work with $\bsx\sim\dunif[0,1]^s$.

The MC estimate of $\mu$ is
\begin{align}\label{eq:mc}
\hat\mu_{\mc} = \frac1n\sum_{i=0}^{n-1}f(\bsx_i),\quad\bsx_i\simiid\dunif[0,1]^s.
\end{align}
The independent uniform draws $\bsx_i$ will form clusters
and leave gaps in $[0,1]^s$. This fact has lead to the development
of quasi-Monte Carlo (QMC) methods, beginning
with \cite{rich:1951}, designed to cover the unit cube more
evenly. See \cite{dick:kuo:sloa:2013} for a recent survey.
A QMC estimate $\hat\mu_{\qmc}$ has the same form as
$\hat\mu_{\mc}$ from~\eqref{eq:mc} except that
$n$ distinct points $\bsx_i\in[0,1]^s$ are chosen deterministically
so as to make the discrete uniform distribution
on $\{\bsx_0,\dots,\bsx_{n-1}\}$ close to the continuous
uniform distribution on $[0,1]^s$, by minimizing a measure of
the discrepancy (see \cite{dick:pill:2014})
between those distributions.

Using the Koksma-Hlawka inequality \cite{hick:2014} it is possible
to show that some QMC constructions attain
\begin{align}\label{eq:koksmahlawka}
|\hat\mu_{\qmc}-\mu| =O( n^{-1}\log(n)^{s-1})
\end{align}
when $f$ has bounded variation in the sense of Hardy
and Krause, which we write as $f\in\bvhk=\bvhk[0,1]^s$.
See~\cite{variation} for a description of this variation.
A drawback of QMC points is that they do not support
a practical strategy to compute the bound
in~\eqref{eq:koksmahlawka}.
Randomized QMC (RQMC) points $\bsx_0,\dots,\bsx_{n-1}$
are constructed so that individually $\bsx_i\sim\dunif[0,1]^d$
while collectively these points
have the low discrepancy that makes~\eqref{eq:koksmahlawka} hold.
Then we can estimate our error statistically, using independent
replicates of the randomization procedure.
See~\cite{lecu:lemi:2002} for a survey of RQMC.

In this paper we focus on perhaps the most widely used
QMC method, the Sobol' sequences of \cite{sobo:1967}.
We consider randomizing them
with the RQMC method known as scrambled nets from \cite{rtms}.
The MC estimate satisfies
\begin{align}\label{eq:mcrate}
\e( (\hat\mu_{\mc}-\mu)^2)=\frac{\sigma^2}n.
\end{align}
Thus MC has a root mean squared error (RMSE) of $\sigma/n^{1/2}$.
The QMC error in~\eqref{eq:koksmahlawka} is asymptotically
better, but for large $s$ the $\log(n)^{s-1}$ factor leaves room for doubt
about QMC at feasible sample sizes.

For scrambled nets we have
\begin{align}\label{eq:gainbound}
\e( (\hat\mu_{\rqmc}-\mu)^2)\le \frac{\Gamma\sigma^2}n
\end{align}
for a maximal gain coefficient $\Gamma<\infty$, removing
the powers of $\log(n)$.
If $f\in\bvhk$, then~\eqref{eq:koksmahlawka} also holds
for $\hat\mu_{\rqmc}$, so RQMC gets the asymptotic 
benefit of QMC
while~\eqref{eq:gainbound} bounds how much
worse RQMC could be compared to MC for finite $n$
(with an adversarily chosen integrand).

When scrambling the nets taken from Faure 
sequences~\cite{faures}, it is known
from \cite{owensinum} that $\Gamma \le \exp(1)\doteq 
2.718$. The nets of Sobol'~\cite{sobo:1967} appear to be more widely
used.  For them it is known from \cite{snxs} that
$\Gamma\le 2^t3^s$ where $t$ is the quality parameter
that we describe below.
In this paper we improve that bound to show that
$\Gamma\le 2^{t+s-1}$. 
This bound can also be deduced from the
results of Niederreiter and Pirsic \cite{microstruct},
but to our knowledge this has not been remarked
on before. We further show that
all the nonzero gain coefficients are powers of two
and we provide a slight improvement in
the microstructure gain bounds from \cite{microstruct}.

An outline of this paper is as follows.
Section~\ref{sec:background} defines digital nets
and sequences, and reviews properties of scrambled
digital nets. Section~\ref{sec:bound} proves our
bound $2^{t+s-1}$.
Section~\ref{sec:pow2} proves that nonzero
gain coefficients must be powers of $2$.
Both of these results hold for any scrambled nets in
base $2$ including those of Sobol' \cite{sobo:1967}
as well as those of Niederreiter and Xing \cite{niedxing96, niedxing96b}
and base $2$ nets constructed via polynomial
lattice rules, as described in \cite{dick:pill:2010}.
Section~\ref{sec:reduced} shows that an improved
exponent of $2$ is possible by sharpening the
usual notion of the $t$ parameter for a subset
of variables. One concrete example with an
improved exponent is provided using
shift nets of Schmid \cite{schm:1998}.
Section~\ref{sec:disc} has a discussion.

\section{Notation and background}\label{sec:background}

In this section we define digital nets
and sequences.  Then we describe methods
of scrambling them and their properties.
The key property in this paper is the
set of gain coefficients of a digital net.

Throughout this paper we have a dimension $s\ge1$.
We write $1{:}s$ for $\{1,2,\dots,s\}$.
We use $\ints$ for the integers, $\natu_0$ for non-negative integers,
and for integers $n\ge1$ we let $\ints_n=\{0,1,\dots,n-1\}$.
For $u\subseteq 1{:}s$ and $\bsx=(x_1,\dots,x_s)\in[0,1]^s$
we write $\bsx_u$ for the tuple $(x_j)_{j\in u}$.
The cardinality of $u$ is written $|u|$.
For a set with a lengthy definition,
$\#$ may be used for cardinality.
For a statement $S$ we use $\one_S$ or $\one\{S\}$,
depending on readability, to denote
a variable that is $1$ when $S$ holds and $0$ otherwise.

\subsection{Digital nets and sequences}

We let $b\ge2$ be an integer base in which
to represent integers and points in $[0,1)$.
We work with half-open intervals because
we will need to partition $[0,1)^s$ into
congruent subsets.  Note that the problems
are still defined as $\int_{[0,1]^s}f(\bsx)\rd\bsx$
because QMC is strongly connected to Riemann 
integration \cite{nied:1978} and the notion of 
bounded variation
that we use is also defined on closed unit cubes.
We begin with some standard definitions.

\begin{definition}
An $s$-dimensional elementary interval in base $b$ has the form
$$
E(\bsk,\bsc)=\prod_{j=1}^s\Bigl[
\frac{c_j}{b^{k_j}},
\frac{c_j+1}{b^{k_j}}
\Bigr)
$$
where $\bsk = (k_1,\dots,k_s)\in\ints^s$
and $\bsc = (c_1,\dots,c_s)\in\ints^s$
satisfy $k_j\ge0$ and $0\le c_j <b^{k_j}$.
\end{definition}

Given $\bsk$, we define $|\bsk|=\sum_{j=1}^sk_j$.
For a given vector $\bsk$,
the $b^{|\bsk|}$ elementary intervals $E(\bsk,\bsc)$
partition $[0,1)^s$ into congruent sub-intervals.
Ideally they should all get the same number of our
integration points $\bsx_i$ and digital nets
defined next make this happen in certain cases.

\begin{definition}
For integers $m\ge t\ge0$ and $b\ge 2$ and $s\ge1$,
a $(t,m,s)$-net in base $b$ is a sequence
$\bsx_0,\dots,\bsx_{n-1}\in[0,1)^s$ for $n=b^m$ where
$$
\sum_{i=0}^{n-1}\one\{\bsx_i\in E(\bsk,\bsc)\} =
nb^{-|\bsk|} = b^{m-|\bsk|}
$$
for every elementary interval $E(\bsk,\bsc)$ with $|\bsk|\le m-t$.
\end{definition}

Other things being equal, we would prefer smaller
$t$ and $t=0$ is the best.
For a given $m$ and $s$ and $b$, the smallest attainable
$t$ might be larger than $0$.
The minT project \cite{schu:schm:2006,schu:schm:2009}
keeps track of the minimum achieved values of $t$
for given $m$ and $s$ and $b$ along with
known lower bounds.
When we refer to the value of $t$ for a
sequence of points, we mean the smallest value
of $t$ for which the sequence is a $(t,m,s)$-net.

\begin{definition}
For integers $t\ge0$ and $b\ge 2$ and $s\ge1$,
a $(t,s)$-sequence in base $b$ is an infinite sequence
$\bsx_0,\bsx_1,\dots\in[0,1)^s$
such that for any integer $m\ge t$ and any integer
$r\ge0$ the subsequence
$$
\bsx_{rb^m}, \bsx_{rb^m+1},\dots,\bsx_{(r+1)b^m-1}\in[0,1)^s
$$
is a $(t,m,s)$-net in base $b$.
\end{definition}

The value of $(t,s)$-sequences is that they
provide an extensible set of $(t,m,s)$-nets.
The first $b^m$ points are a $(t,m,s)$-net
in base $b$ and if we increase the
sample to $b^{m+1}$ points then we have
included $b-1$ more $(t,m,s)$-nets and
they're carefully constructed to fill the gaps
that each other leave, so that taken together
they now comprise a $(t,m+1,s)$-net in base $b$.
Taking $b$ of those $(t,m+1,s)$-nets yields
a $(t,m+2,s)$-net, and so on.
The $(t,m,s)$-nets that we study are
taken to be the first $b^m$ points of a $(t,s)$-sequence.

The first $(t,m,s)$-nets and $(t,s)$-sequences
are those of Sobol' \cite{sobo:1967}.
They are all in base $b=2$.
Sobol's construction actually defines a whole family
of point sequences, determined by one's choice of
`direction numbers'.
Joe and Kuo \cite{joe:kuo:2008} made an extensive
search for good direction numbers and their
choices are widely used.

The smallest value of $t$ that one can attain
is nondecreasing in~$s$.
The most favorable growth rates for $t$ as
a function of $s$ are in the $(t,s)$-sequences of
Niederreiter and Xing \cite{niedxing96, niedxing96b}.
These are ordinarily implemented in base $2$.

The $(t,s)$-sequences of
Faure \cite{faures} have $t=0$ but they require
a prime base $b\ge s$.
The modern notion of digital nets and sequences
is based on the synthesis in \cite{nied87}.
That reference also generalizes Faure's construction
to bases $b=p^r$ for a prime number $p$
and integer $r\ge1$.

If $\bsx_0,\dots,\bsx_{n-1}$ is a $(t,m,s)$-net in base $b$
then $\bsx_{0,u},\dots,\bsx_{n-1,u}\in[0,1)^{|u|}$
form a $(t,m,|u|)$-net in base $b$.
It is common that the quality parameter of these
projected digital nets is smaller than the
one for the original net.
We let $t_u$ be the smallest such $t$ for which
$\bsx_{0,u},\dots,\bsx_{n-1,u}\in[0,1)^{|u|}$
is a $(t,m,|u|)$-net in base $b$.
For theory about $t_u$ see \cite{schm:2001},
for its use defining direction numbers, see \cite{joe:kuo:2008},
and for computational algorithms, see \cite{mari:godi:lecu:2020}.
The quality parameter for the first $b^m$
points of a $(t,s)$-sequence may also be
smaller than the value of $t$ that
holds for the entire sequence.
We will introduce a second quality
parameter for a projected $(t,m,s)$-net
in Section~\ref{sec:reduced}.

\subsection{Scrambling nets}

A scrambled net is one where
the base $b$ digits of a $(t,m,s)$-net
in base $b$ have been randomly permuted
in such a way that the resulting points
satisfy $\bsx_i\sim\dunif[0,1]^s$ individually
while the ensemble $\bsx_0,\dots,\bsx_{n-1}$
is still a $(t,m,s)$-net in base $b$ with probability one.
See \cite{rtms} for the details of a nested uniform scramble and \cite{mato:1998} for
a random linear scramble of Matou\v{s}ek 
that requires less storage.

The nested uniform scrambling has the following properties:
\begin{align*}
&\e( \hat\mu_{\rqmc})=\mu && f\in L^1[0,1]^s,\\
&\Pr\bigl( \lim_{n\to\infty} \hat\mu_{\rqmc}=\mu\bigr)=1&& f\in L^{1+\epsilon}[0,1]^s,\quad \text{some $\epsilon>0$,}\\
&\var( \hat\mu_{\rqmc} )= o(1/n)&& f\in L^2[0,1]^2,\\
&\var(\hat\mu_{\rqmc}) \le \Gamma\sigma^2/n && \var(f(\bsx))=\sigma^2,\quad\text{some $\Gamma<\infty$,}\\
& \var(\hat\mu_{\rqmc}) = O( n^{-3}\log(n)^{s-1}) && \partial^uf\in L^2[0,1]^s\quad
\text{all $u\subseteq1{:}s$},\quad\text{and}\\
& \var(\hat\mu_{\rqmc}) = O( n^{-2}\log(n)^{2(s-1)}) && f\in\bvhk[0,1]^s.
\end{align*}
See \cite{sllnrqmc} for references. It is likely that the random linear
scrambling has these moment properties too.
The rate $O(n^{-3}\log(n)^{s-1})$ is  established under somewhat
weaker conditions than stated above by Yue and Mao \cite{yue:mao:1999}.
There is also a central limit theorem for nested
uniform sampling when $t=0$ due to Loh \cite{loh:2003}.

If $f$ is singular then $f\not\in \bvhk$, so most QMC theory
does not apply to it.
Many singular integrands of interest are in $L^2$, and so
RQMC theory applies to them.
Similarly, step discontinuities and discontinuites in the derivative
of $f$ typically lead to $f\not\in\bvhk$ \cite{variation}
while not ruling out $f\in L^2$.

The constant $\Gamma$ above is the maximum
gain coefficient of the digital net.   It is the
key quantity that we study here.

\subsection{Gain coefficients}

The gain coeffiicents we study are
defined with respect to a different
parameterization of elementary intervals.
For $u\subseteq1{:}s$, $\bsk\in\natu_0^{|u|}$
and $\bsc\in\natu_0^{|u|}$ with $c_j<b^{k_j}$ let
\begin{align}\label{eq:newelint}
E(u,\bsk,\bsc) = \prod_{j\in u}\Bigl[
\frac{c_j}{b^{k_j}}, \frac{c_j+1}{b^{k_j}}\Bigr)
\prod_{j\not\in u}[0,1).
\end{align}
In this representation
$\vol(E(u,\bsk,\bsx)) = b^{-|u|-|\bsk|}$.

Using a base $b$ Haar wavelet decomposition
of $L^2[0,1]^s$ in \cite{owensinum}
we can write $f\in L^2[0,1]^s$ as
$$
f(\bsx) = \sum_{u\subseteq 1{:}s}\sum_{\bsk\in\natu_0^{|u|}}\nu_{u,\bsk}(\bsx)
$$
where the function $\nu_{u,\bsk}$ is constant
within the elementary intervals~\eqref{eq:newelint}.
These functions are defined there in a way that
makes them mutually orthogonal.
For $u=\emptyset$ there is just one
of these functions,
and it is constant over $[0,1]^s$ with $\nu_{\emptyset,()}(\bsx)=\mu$
for all $\bsx$.

From the orthogonality of $\nu_{u,\bsk}$ we find that
$$
\sigma^2\equiv \var(f(\bsx)) =
\sum_{u\ne\emptyset}\sum_{\bsk\in\natu_0^{|u|}}\sigma^2_{u,\bsk}
$$
where for $u\ne\emptyset$ we let
$\sigma^2_{u,\bsk} = \var(\nu_{u,\bsk}(\bsx)) = \int_{[0,1]^s}\nu_{u,\bsk}(\bsx)^2$.
Therefore with plain MC,
$$
\var(\hat\mu_{\mc}) =
\frac1n\sum_{u\ne\emptyset}\sum_{\bsk\in\natu_0^{|u|}}\sigma^2_{u,\bsk}.
$$

If instead of plain MC we use scrambled nets,
then from \cite{owensinum} the sample averages of $\nu_{u,\bsk}$
are still uncorrelated and
$$
\var(\hat\mu_{\rqmc}) =
\frac1n
\sum_{u\ne\emptyset}\sum_{\bsk\in\natu_0^{|u|}}
\Gamma_{u,\bsk}\sigma^2_{u,\bsk}
$$
for gain coefficients $\Gamma_{u,\bsk}$ 
defined at~\eqref{eq:gaineqn} below.
The maximal gain coefficient is
$$
\Gamma = \max_{u\ne\emptyset}\max_{\bsk\in\natu_0^{|u|}} \Gamma_{u,\bsk},
$$
and then $\var(\hat\mu_{\rqmc}) \le
\Gamma\sigma^2/n=\Gamma\var(\hat\mu_{\mc})$.

For a scrambled $(t,m,s)$-net in base $b$,
if $m-t\ge |u|+|\bsk|$, then
all of $E(u,\bsk,\bsc)$ contain the same
number of points of the net.
As a result $\nu_{u,\bsk}$ is integrated without
error and  $\Gamma_{u,\bsk}=0$.

The general formula for gain coefficients
when scrambling points $\bsx_0,\dots,\bsx_{n-1}$ is
\begin{align}\label{eq:gaineqn}
\Gamma_{u,\bsk} = \frac1{n(b-1)^{|u|}}\sum_{i=0}^{n-1}\sum_{i'=0}^{n-1}
\prod_{j\in u} \bigl(
b\one_{\lfloor b^{k_j+1}x_{ij}\rfloor=\lfloor b^{k_j+1}x_{i'j}\rfloor}
-
\one_{\lfloor b^{k_j}x_{ij}\rfloor=\lfloor b^{k_j}x_{i'j}\rfloor}
\bigr).
\end{align}
Here $\one_{\lfloor b^kx_{ij}\rfloor
=\lfloor b^kx_{i'j}\rfloor}$
means that $x_{ij},x_{i'j}\in[0,1)$ match
in their first $k$ base $b$ digits.
The bounds from~\cite{snxs}
are based on equation~\eqref{eq:gaineqn}.
Equation~\eqref{eq:gaineqn} holds for whatever
points we might choose to scramble, not just
digital nets. However, the way digital nets are
constructed tends to
give them small values of $\Gamma_{u,\bsk}$.

When $b=2$, the factors being multiplied
in~\eqref{eq:gaineqn} can
only take three distinct values,
$0$, $-1$, or $1$,
according to whether
$x_{ij}$ and $x_{i'j}$ match
to fewer than $k$ bits,
exactly $k$ bits,
or more than $k$ bits.
Also the factor $(b-1)^{-|u|}$
reduces to $1$.
From this we get the simple bound
\begin{align}\label{eq:gaineqnsobol}
\Gamma_{u,\bsk} \le
\frac1n\sum_{i=0}^{n-1}\sum_{i'=0}^{n-1}
\prod_{j\in u}
\one_{\lfloor b^{k_j}x_{ij}\rfloor=\lfloor b^{k_j}x_{i'j}\rfloor}
\end{align}
when scrambling in base $2$.
We will see below that the bound in~\eqref{eq:gaineqnsobol}
is a power of two.  More surprisingly the exact gain in~\eqref{eq:gaineqn}
is either $0$ or a power of two.

\subsection{Prior gain bounds}

From Lemma 3 in \cite{snxs} we get
\begin{align}\label{eq:oldlemma3}
\Gamma_{u,\bsk}\le b^t\frac{b^{|u|}+(b-2)^{|u|}}{2(b-1)^{|u|}},
\quad\text{when $m-t\le|\bsk|$,}
\end{align}
for a slight generalization of $(t,m,s)$-nets in base $b$.
The statement of that Lemma has
$m-t<|\bsk|$ but the proof technique
also applies when $m-t=|\bsk|$.
In the case $b=2$, the bound simplifies to
$2^{t+|u|-1}$.  In Section~\ref{sec:bound} we
extend this bound to all $\Gamma_{u,\bsk}$.
Lemma 4 of \cite{snxs} gives
$$
\Gamma_{u,\bsk}\le b^t\Bigl(\frac{b+1}{b-1}\Bigr)^{|u|}
\quad\text{when $|\bsk|<m-t<|u|+|\bsk|$.}
$$
It is that Lemma that yields
the bound $\Gamma\le2^t3^s$ for
nets in base $2$.

When $t=0$, \cite{owensinum} shows that
$\Gamma_{u,\bsk}\le (b/(b-1))^{s-1}$.
Because such nets are only possible when $b\ge s$
we get $\Gamma_{u,\bsk}\le (b/(b-1))^{b-1}\le\exp(1)$.  Despite this very
low upper bound on worst case $\var(\hat\mu_{\rqmc})/\var(\hat\mu_{\mc})$,
nets in base $2$ are most used in practice.

Niederreiter and Pirsic \cite{microstruct} improved on the
bounds of \cite{snxs} by looking at 
the microstructure of digital nets. Microstructure
refers to the placement of points within elementary
intervals of volume smaller than $b^{m-t}$.
For example the fact that Sobol' points
have $t_{\{j\}}=0$ is an aspect of their
microstructure.

For $\bsk\in\natu_0^s$ they introduce
$$A(k_1,\dots,k_s)
=\biggl\lceil
\max_{\bsc\in\ints_{b^{\bsk}}}
\log_b\biggl(\,
\sum_{i=0}^{n-1}\one\{\bsx_i\in E(\bsk,\bsc)\}
\biggr)
\biggr\rceil
$$
where the condition on $\bsc$ is interpreted componentwise.
They also use
$$
A_K = \max_{|\bsk|=K}A(k_1,\dots,k_s).
$$

These quantities are well defined whether $\bsx_i$ 
are a $(t,m,s)$-net in base $b$ or not, but they
simplify for nets.
The reference \cite{microstruct} provides several
interesting upper and lower bounds
on $A(\cdot)$ based on the $t$ parameter of a net,
or based on having all $\bsx_i\in \ints_{b^m}/b^m$
or knowing that one or more of the one dimensional
projections of the net has $t_{\{j\}}=0$.

From Proposition 5.1 of \cite{microstruct}
$$
\Gamma_{u,\bsk} \le b^{A_{|\bsk|}}
\frac{b^{|u|}+(b-2)^{|u|}}{2(b-1)^{|u|}}.
$$
This improves upon~\eqref{eq:oldlemma3} by
reducing the lead exponent of $b$ and by
applying to all gain coefficients.
We are most interested in $b=2$ for which their bound yields
$$
\Gamma_{u,\bsk} \le
2^{A_{|\bsk|} +|u|-1}.
$$
Their Theorem 4.1 shows that for $(t,m,s)$-nets
where $t_{\{1\}}=\cdots=t_{\{s\}}=0$
that $A_{|\bsk|}\le t$ when $|\bsk|>m-t$.
Because $\Gamma_{1:s,\bsk}=0$ whenever $|\bsk|\le m-t$
we then get $\Gamma_{u,\bsk}\le 2^{t+|u|-1}$
and hence $\Gamma \le 2^{t+s-1}$ for Sobol' nets.

\subsection{Constructions of nets}

Here we describe the algorithms to construct
digital nets in base $2$, following \cite{joe:kuo:2008}.
We will describe how to compute $2^m$ points
$\bsx_i\in[0,1)^s$ to $m$ bits each.
That is enough to get points that are a $(t,m,s)$-net
in base $2$.
If one is planning to extend the points
from $n=2^m$ to some larger
sample size $n=2^M$, then it is best to use $m=M$.
The points we generate actually belong
to $\{0,1/n,2/n,\dots,(n-1)/n\}^s$.
After scrambling, one  ordinarily adds random offsets
$\bsu_i\simiid\dunif[0,1/n)^s$ to the $\bsx_i$.

A digital net is defined in terms of $s$ matrices
$C_j\in\{0,1\}^{m\times m}$ for $j=1,\dots,s$.
For Sobol' sequences
$$
C_j =
\begin{pmatrix}
1 & v_{2,j,1} & v_{3,j,1} & \cdots & v_{m,j,1}\\
0 & 1 & v_{3,j,2}           & \cdots & v_{m,j,2}\\
0 & 0 & 1                     & \cdots & v_{m,j,3}\\
\vdots & \vdots & \vdots &\ddots & \vdots\\
0 & 0 & 0 & \cdots & 1
\end{pmatrix}
$$
defined in terms of direction numbers $v_{k,j}$ that equal
$0.v_{k,j,1}v_{k,j,2}v_{k,j,3}\dots$
in their base $2$ representation.
Note especially that the matrix $C_j$ is
upper triangular and has $1$s on its diagonal.
Sobol' points ordinarily have $C_1=I_m$.

The digital net construction works as follows.
For $0\le i< 2^m$
write $i=\sum_{\ell=1}^mi_\ell2^{\ell-1}$
for bits $i_\ell\in\{0,1\}$.
Similarly, write $x_{ij} = \sum_{\ell=1}^mx_{ij\ell}2^{-\ell}$
for bits $x_{ij\ell}\in\{0,1\}$.
Then the net $\bsx_0,\dots,\bsx_{2^m-1}$ is
defined by
$$
\begin{pmatrix}
x_{ij1}\\
x_{ij2}\\
\vdots\\
x_{ijm}\\
\end{pmatrix}
= C_j
\begin{pmatrix}
i_{1}\\
i_{2}\\
\vdots\\
i_{m}\\
\end{pmatrix}
\quad\tmod 2.
$$

To define $t$ we
describe a process of forming new
matrices by combining some of the
rows of $C_1,\dots,C_s$.
Let $C_j^{(k)}\in\{0,1\}^{k\times m}$ be
the first $k$ rows of $C_j$.
Then for a non-empty
$u=(r_1,r_2,\dots,r_{|u|})\subseteq1{:}s$
and a vector $\bsk = (k_{r_1},k_{r_2},\dots,k_{r_{|u|}})\in
\{0,1,\dots,m\}^s$, let
$$
C_{u,\bsk} =
\begin{pmatrix}
C_{r_1}^{(k_1)}\\
C_{r_2}^{(k_2)}\\
\vdots\\
C_{r_{|u|}}^{(k_{|u|})}\\
\end{pmatrix}\in\{0,1\}^{|\bsk|\times m}.
$$

The $t$ value of a digital net in base $2$,
constructed from $C_1,\dots,C_m$ is
the smallest value of $t$
such that $C_{u,\bsk}$ has linearly
independent rows over $\ints_2$
whenever $|\bsk|\le m-t$.
This value is the smallest $t$ for which
the definition in terms of $E(\bsk,\bsc)$ 
holds.
The description above applies to any
binary matrices $C_1,\dots,C_m\in\{0,1\}^{m\times m}$,
not just upper triangular ones.


\section{Bound on $\Gamma$}\label{sec:bound}

In this section we prove that $\Gamma_{u,\bsk}\le 2^{t+|u|-1}$.
It follows that $\Gamma \le 2^{t+s-1}$.
We make extensive use of the following 
elementary fact.
\begin{proposition}\label{prop:basic}
Let $A\in\{0,1\}^{K\times m}$ have rank $r$
over $\ints_2$
and let $y\in\{0,1\}^K$.
Then the set of solutions $x\in\{0,1\}^m$
to $Ax=y \tmod 2$
has cardinality $0$ or $2^{m-r}$.
\end{proposition}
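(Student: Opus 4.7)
The plan is to treat this as a standard rank-nullity argument over the field $\ints_2$, viewing $\{0,1\}^m$ as the vector space $\ints_2^m$ and $A$ as a linear map $\ints_2^m\to\ints_2^K$. The only thing to be careful about is that the result is an absolute count of binary vectors, not a dimension, so I would want the final step to translate dimension into cardinality via the fact that a $d$-dimensional subspace of $\ints_2^m$ contains exactly $2^d$ elements.

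First I would dispose of the inconsistent case: if $y\notin\mathrm{image}(A)$, then there are no solutions and the cardinality is $0$, which matches the statement. Otherwise, fix any particular solution $x_0\in\{0,1\}^m$ with $Ax_0=y\tmod2$. Then for any $x\in\{0,1\}^m$, the equation $Ax=y\tmod2$ is equivalent to $A(x-x_0)=0\tmod2$, so the solution set is the coset $x_0+\ker(A)$, which has the same cardinality as $\ker(A)$.

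Next I would invoke the rank-nullity theorem over $\ints_2$ applied to $A\colon\ints_2^m\to\ints_2^K$: since $\rank(A)=r$, we have $\dim\ker(A)=m-r$. A subspace of $\ints_2^m$ of dimension $m-r$ contains exactly $2^{m-r}$ vectors (each element is uniquely determined by its coordinates in a basis of size $m-r$, with each coordinate taking $2$ values). Hence the coset $x_0+\ker(A)$ also has $2^{m-r}$ elements, completing the proof.

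The hard part is essentially nonexistent here; this is a textbook fact about linear systems over a finite field. The only thing worth stating explicitly is that all the arithmetic, including the rank, is computed in $\ints_2$, so that the usual rank-nullity identity applies verbatim and the dimension-to-cardinality conversion uses $|\ints_2|=2$.
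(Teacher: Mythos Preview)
Your argument is correct and is exactly the standard rank--nullity reasoning one would expect; the paper in fact states this proposition as an ``elementary fact'' and offers no proof at all. So there is nothing to compare against, and your write-up would serve perfectly well as the omitted justification.
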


We need to keep track of the number of
bits where $x,x'\in[0,1)$ match.
For this we define
$$
M(x,x') = \max\bigl\{k\in\natu_0\mid
\lfloor 2^kx\rfloor
=\lfloor 2^kx'\rfloor\bigr\}\in \natu_0\cup\{\infty\}.
$$
Now for points that are scrambled in base $2$ we get
\begin{align}\label{eq:gain2}
\Gamma_{u,\bsk} &= \frac1n
\sum_{i=0}^{n-1}\sum_{i'=0}^{n-1}\prod_{j\in u}N_{i,i',j}
\end{align}
for
\begin{align*}
N_{i,i',j}  =
\begin{cases}
\phm0, & M(x_{ij},x_{i'j}) < k_j\\
-1, & M(x_{ij},x_{i'j}) =k_j\\
\phm1, & M(x_{ij},x_{i'j}) >k_j.
\end{cases}
\end{align*}

We use arrows to denote bit vectors
derived from values in $[0,1)$ or in $\natu_0$.
For an integer $i=\sum_{\ell=1}^mi_\ell2^{\ell-1}$
with $i_\ell\in\{0,1\}$ we write
$\vec{i}=(i_1,i_2,\dots,i_m)^\tran$
and for $x=\sum_{\ell=1}^mx_\ell2^{-\ell}$
we write $\vec{x}=(x_{1},x_{2},\dots,x_{m})^\tran$.
In either usage, $\vec{0}=(0,0,\dots,0)^\tran$
and there are no nonzero values in
$[0,1)\cap\natu_0$, so the mapping to $\{0,1\}^m$ 
is well defined.
We only need to represent the
bits of $2^m$ integers in $\ints_{2^m}$ and
$2^m$ of the points in $[0,1)$.
Some points in $x\in[0,1)$ have two binary representations,
such as $1/4 = 0.010000\dots=0.001111\cdots$.
We use the choice that ends in a tail of $0$s,
via $x_\ell = \lfloor 2^\ell x\rfloor \tmod 2$.

We will also need to represent some sets
of integers as bit vectors. Given a set $u\subseteq1{:}s$
and $v\subseteq u$, we let $\vec{v}=\vec{v}[u]\in\{0,1\}^{|u|}$
have bits $1$ for  indices corresponding to
elements of $v$
and $0$ for indices corresponding to
elements of $u\setminus v$.

Arithmetic on bit vectors is done componentwise
modulo 2.
We write $\vec{i}\oplus\vec{j}$ and 
$\vec{i}\ominus\vec{j}$ for the componentwise
sum and difference of bit vectors.

For non-empty
$u=\{r_1,\dots,r_{|u|}\}\subseteq1{:}s$ and $\bsk\in\natu_0^{|u|}$ we
define 
$$C_{u,\bsk+\one}=C_{u,\bsk'}\quad\text{where $k'_{r_j}=k_{r_j}+1$ for
$j=1,\dots,|u|$.}$$
Thus $C_{u,\bsk+\one}$ has $|u|$ additional rows in it
beyond those in $C_{u,\bsk}$.
We write the matrix with just these 
$|u|$ additional rows as 
$$\nabla C_{u,\bsk}=
\begin{pmatrix}
C_{r_1}(k_{r_1}+1,:\,)\\
C_{r_2}(k_{r_2}+1,:\,)\\
\vdots\\
C_{r_{|u|}}(k_{r_{|u|}}+1,:\,)
\end{pmatrix}.$$

With the above setup, we are ready to
establish our bounds.
Within the proof of the next
theorem we show that
$$\sum_{i'=0}^{2^m-1}\prod_{j\in u}N_{i,i',j}
=\sum_{i'=0}^{2^m-1}\prod_{j\in u}N_{0,i',j}$$
by symmetry and then bound that sum
using Proposition~\ref{prop:basic}.

\begin{theorem}\label{thm:gainrepresentation}
For integers $m\ge1$ and $s\ge1$,
let $C_1,\dots,C_s\in\{0,1\}^{m\times m}$
generate the digital net $\bsx_0,\dots,\bsx_{2^m-1}$
via $\vec{x}_{ij} = C_j\vec{i}$ for $0\le i<2^m$
and $j=1,\dots,s$.
Then for nonempty $u\subseteq1{:}s$
and $\bsk\in\natu_0^{|u|}$
the gain coefficient $\Gamma_{u,\bsk}$
from~\eqref{eq:gain2} satisfies
\begin{align}\label{eq:gainrepresentation}
\Gamma_{u,\bsk}&=\sum_{i\in\ints_{2^m}}
\one_{C_{u,\bsk}\vec{i}=0}\prod_{j\in u}N_{0,i,j}\notag\\
&=
\sum_{v\subseteq u}\#\bigl\{ i\in\ints_{2^m}
\mid C_{u,\bsk}\vec{i}=0,
\,\nabla C_{u,\bsk}\vec{i}=\vec{v}[u]
\bigr\}(-1)^{|v|}.
\end{align}
\end{theorem}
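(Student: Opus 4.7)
The plan is to establish both equalities by first exploiting the XOR-linearity of the digital net to collapse the double sum in~\eqref{eq:gain2} into a single sum, and then by decoding the sign pattern of the surviving terms through the auxiliary rows $\nabla C_{u,\bsk}$.

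First I would rewrite the matching event $M(x_{ij},x_{i'j})\ge k$ in linear-algebraic form. Since $\vec{x}_{ij}=C_j\vec{i}$ and the matrix acts linearly over $\ints_2$, the first $k$ bits of $x_{ij}$ agree with those of $x_{i'j}$ if and only if $C_j^{(k)}(\vec{i}\ominus\vec{i'})=0$, equivalently if and only if the first $k$ bits of $x_{i\oplus i',j}$ are zero, i.e.\ $M(0,x_{i\oplus i',j})\ge k$. This gives the crucial identity $N_{i,i',j}=N_{0,i\oplus i',j}$. Substituting into~\eqref{eq:gain2} and reindexing by $i''=i\oplus i'$ (each value attained by $n=2^m$ ordered pairs $(i,i')$) yields
\begin{align*}
\Gamma_{u,\bsk}=\sum_{i''\in\ints_{2^m}}\prod_{j\in u}N_{0,i'',j}.
\end{align*}
Since $N_{0,i'',j}$ vanishes unless the first $k_j$ bits of $x_{i'',j}$ are $0$, the nonvanishing terms are exactly those $i''$ with $C_j^{(k_j)}\vec{i''}=0$ for every $j\in u$, i.e.\ $C_{u,\bsk}\vec{i''}=0$. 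Inserting the indicator $\one_{C_{u,\bsk}\vec{i''}=0}$ gives the first displayed equality in~\eqref{eq:gainrepresentation}.

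For the second equality I would classify the surviving $i''$ according to the $(k_j+1)$-st bit of $x_{i'',j}$ for each $j\in u$. Given that $C_{u,\bsk}\vec{i''}=0$, the definition of $N_{0,i'',j}$ tells us that $N_{0,i'',j}=-1$ when the $(k_j+1)$-st bit of $x_{i'',j}$ equals $1$ (i.e.\ $M(0,x_{i'',j})=k_j$) and $N_{0,i'',j}=+1$ when it equals $0$ (i.e.\ $M(0,x_{i'',j})>k_j$). But the $(k_j+1)$-st bit of $x_{i'',j}$ is exactly the entry of $\nabla C_{u,\bsk}\vec{i''}$ in the position corresponding to $j$. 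Hence if we let $v=v(i'')\subseteq u$ be the set of coordinates where this bit equals $1$, then $\nabla C_{u,\bsk}\vec{i''}=\vec{v}[u]$ and $\prod_{j\in u}N_{0,i'',j}=(-1)^{|v|}$. Grouping the sum over $i''$ according to the value of $v$ produces precisely the second expression in~\eqref{eq:gainrepresentation}.

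The content of the theorem is essentially bookkeeping once the XOR symmetry is identified, so there is no genuinely hard step. The only place that requires care is making sure the translation $N_{i,i',j}=N_{0,i\oplus i',j}$ is applied simultaneously for all $j\in u$ (which is legitimate because $\vec{i}\oplus\vec{i'}$ does not depend on $j$), and that the indicator conditions are written in the right dimensions: $C_{u,\bsk}$ stacks only the first $k_j$ rows of each $C_j$, while $\nabla C_{u,\bsk}$ supplies the one extra row per coordinate needed to detect whether $N_{0,i'',j}$ is $+1$ or $-1$.
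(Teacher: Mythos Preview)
Your proof is correct and follows essentially the same approach as the paper: both exploit the XOR symmetry to reduce the double sum to a single sum over $i''=i\oplus i'$, observe that only $i''$ with $C_{u,\bsk}\vec{i''}=0$ contribute, and then read off the sign $(-1)^{|v|}$ from $\nabla C_{u,\bsk}\vec{i''}=\vec{v}[u]$. The only cosmetic difference is that you first isolate the identity $N_{i,i',j}=N_{0,i\oplus i',j}$ and then reindex, whereas the paper carries the condition on $\vec{i'}\ominus\vec{i}$ through the argument before collapsing; these are the same computation in a slightly different order.
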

\begin{proof}
For any $i\in\ints_{2^m}$,
there is some $\bsc$ with $c_j\in\ints_{2^{k_j}}$
for which
$\bsx_i\in E(u,\bsk,\bsc)$.
Then for $i'\in\ints_{2^m}$
with $\bsx_{i'}\not\in E(u,\bsk,\bsc)$
we have $N_{i,i',j}=0$ for some $j\in u$.
As a result, $\prod_{j\in u}N_{i,i',j}=0$ unless $\bsx_{i'}\in E(u,\bsk,\bsc)$ too.
Having both points in the same $E(u,\bsk,\bsc)$
happens if and only if
$C_{u,\bsk}\vec{i}=C_{u,\bsk}\vec{i'}$,
and so only $i'$ with $C_{u,\bsk}(\vec{i'}\ominus\vec{i})=0$
have $\prod_{j\in u}N_{i,i',j}\ne0$.

Now for $\bsx_{i'}\in E(u,\bsk,\bsc)$
it remains to find the sign of  $\prod_{j\in u}N_{i,i',j}$.
In that case
$$N_{i,i',j}=
\begin{cases}
-1, &M(x_{i'j},x_{ij})=k_j\\
\phm1, & M(x_{i'j},x_{ij})>k_j.
\end{cases}
$$
Suppose that $N_{i,i',j} = -1$ for $j\in v\subseteq u$
and $N_{i,i',j}=1$ for $j\in u\setminus v$.
This happens when
and only when $\nabla C_{u,\bsk}(\vec{i'}\ominus\vec{i})
=\vec{v}[u]$.
Then $\prod_{j\in u}N_{i,i',j}=(-1)^{|v|}$.
It follows that
\begin{align*}
\Gamma_{u,\bsk}
&=\frac1n\sum_{i=0}^{2^m-1}\sum_{v\subseteq u}\#
\bigl\{ i'\in\ints_{2^m}
\mid C_{u,\bsk}(\vec{i'}\ominus\vec{i})=0,
\nabla C_{u,\bsk}(\vec{i'}\ominus\vec{i})=\vec{v}[u]
\bigr\}(-1)^{|v|}\\
&=
\sum_{v\subseteq u}\#
\bigl\{ i'\in\ints_{2^m}
\mid C_{u,\bsk}\vec{i'}=0,
\nabla C_{u,\bsk}\vec{i'}=\vec{v}[u]
\bigr\}(-1)^{|v|}
\end{align*}
where the second step
follows because $\vec{i'}\ominus\vec{i}$
runs over the set $\{0,1\}^m$ for any $i\in\ints_{2^m}$.
\end{proof}

The next corollary is already known
from the definition of $(t,m,s)$-nets.
We include it to show how it
follows from Theorem~\ref{thm:gainrepresentation}
and because we need it below.
\begin{corollary}\label{cor:gainzero}
If $C_{u,\bsk+\one}$ has full row rank $|u|+|\bsk|$
for non-empty $u\subseteq1{:}s$,
then $\Gamma_{u,\bsk}=0$.
\end{corollary}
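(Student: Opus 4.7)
The plan is to apply the representation of $\Gamma_{u,\bsk}$ from Theorem~\ref{thm:gainrepresentation} and exploit the full row rank hypothesis to show each inner count is independent of $v$, so that the alternating sum over $v\subseteq u$ collapses.

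First I would observe that the system of linear constraints $C_{u,\bsk}\vec{i}=0$ together with $\nabla C_{u,\bsk}\vec{i}=\vec{v}[u]$ is, up to a reordering of rows, the same as a single equation $C_{u,\bsk+\one}\vec{i}=\vec{y}_v$ where $\vec{y}_v\in\{0,1\}^{|u|+|\bsk|}$ has a $0$ in each coordinate belonging to the block $C_{r_j}^{(k_{r_j})}$ and the appropriate bit of $\vec{v}[u]$ in each coordinate belonging to the extra row $C_{r_j}(k_{r_j}+1,:\,)$. Thus each cardinality appearing in~\eqref{eq:gainrepresentation} is the number of solutions in $\{0,1\}^m$ to $C_{u,\bsk+\one}\vec{i}=\vec{y}_v \tmod 2$.

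Next I would invoke Proposition~\ref{prop:basic}. By hypothesis the matrix $C_{u,\bsk+\one}$ has full row rank $|u|+|\bsk|$, in particular $|u|+|\bsk|\le m$, so for every right-hand side $\vec{y}_v$ the solution set has cardinality exactly $2^{m-|u|-|\bsk|}$, a constant that does not depend on $v$. Substituting this into the representation from Theorem~\ref{thm:gainrepresentation} gives
\begin{align*}
\Gamma_{u,\bsk}
&= 2^{m-|u|-|\bsk|}\sum_{v\subseteq u}(-1)^{|v|}
= 2^{m-|u|-|\bsk|}(1-1)^{|u|}=0,
\end{align*}
where the binomial identity vanishes because $u$ is non-empty.

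The only subtle step is the first one: matching the two blocks of constraints $(C_{u,\bsk}\vec{i},\nabla C_{u,\bsk}\vec{i})$ with a single equation $C_{u,\bsk+\one}\vec{i}=\vec{y}_v$. This requires a careful bookkeeping of how the rows of $C_{u,\bsk+\one}$ are built from $C_{r_j}^{(k_{r_j}+1)}$ versus how $\nabla C_{u,\bsk}$ pulls out the last row of each such block, but once this identification is made the rest is just the standard inclusion–exclusion vanishing.
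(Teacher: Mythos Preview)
Your proof is correct and follows essentially the same route as the paper's: both apply the representation from Theorem~\ref{thm:gainrepresentation}, use the full row rank of $C_{u,\bsk+\one}$ to conclude that each count equals $2^{m-\rank(C_{u,\bsk+\one})}=2^{m-|u|-|\bsk|}$ independently of $v$, and then let the alternating sum $\sum_{v\subseteq u}(-1)^{|v|}$ vanish. Your explicit repackaging of the two blocks of constraints as a single system $C_{u,\bsk+\one}\vec{i}=\vec{y}_v$ is exactly what the paper does implicitly, and your remark that full row rank forces surjectivity (so the ``$0$ solutions'' alternative in Proposition~\ref{prop:basic} never occurs) is the one point the paper leaves tacit.
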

\begin{proof}
When $C_{u,\bsk+1}$ has full row rank
then $C_{u,\bsk}\vec{i}=0$
and $\nabla C_{u,\bsk}\vec{i}=\vec{v}[u]$
has $2^{m-\rank(C_{u,\bsk+1})}$
solutions for all $v\subseteq u$.
Now Theorem~\ref{thm:gainrepresentation} yields
$\Gamma_{u,\bsk}=2^{m-\rank(C_{u,\bsk+1})}
\sum_{v\subseteq u}(-1)^{|v|}=0.$
\end{proof}

\begin{corollary}\label{cor:thebound}
$\Gamma_{u,\bsk}\le 2^{m-\rank(C_{u,\bsk})}$.
\end{corollary}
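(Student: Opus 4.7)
The plan is to read off the bound directly from the first representation in Theorem \ref{thm:gainrepresentation}, namely
$$\Gamma_{u,\bsk}=\sum_{i\in\ints_{2^m}}\one_{C_{u,\bsk}\vec{i}=0}\prod_{j\in u}N_{0,i,j}.$$
The key observation is that every factor $N_{0,i,j}$ takes values in $\{-1,0,1\}$, so the product $\prod_{j\in u}N_{0,i,j}$ also lies in $\{-1,0,1\}$, and is in particular bounded above by $1$.

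Using this, I would bound each summand by the indicator alone, giving
$$\Gamma_{u,\bsk}\le \sum_{i\in\ints_{2^m}}\one_{C_{u,\bsk}\vec{i}=0}=\#\bigl\{i\in\ints_{2^m}\mid C_{u,\bsk}\vec{i}=\vec{0}\bigr\}.$$
Since $\vec{i}=\vec{0}$ is always a solution, this solution set is nonempty. Proposition \ref{prop:basic}, applied to $A=C_{u,\bsk}$ and $y=\vec{0}$, then gives the count $2^{m-\rank(C_{u,\bsk})}$, which completes the proof.

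There is essentially no obstacle: the work was done in establishing Theorem \ref{thm:gainrepresentation}, and this corollary is a two-line consequence consisting of a trivial bound ($|N_{0,i,j}|\le 1$) followed by one invocation of Proposition \ref{prop:basic}. The only thing to double-check is that we are using the version of the representation \emph{before} the signed decomposition over $v\subseteq u$; the signed form would produce cancellation and is the tool for the sharper identity in Corollary \ref{cor:gainzero}, but it is not what is needed for the crude bound here.
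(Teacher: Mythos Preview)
Your proof is correct and essentially identical to the paper's. The only cosmetic difference is that the paper applies the bound $(-1)^{|v|}\le 1$ to the second (signed-decomposition) line of \eqref{eq:gainrepresentation} and then observes that the sets indexed by $v$ partition $\{\vec{i}\mid C_{u,\bsk}\vec{i}=0\}$, while you apply $\prod_{j\in u}N_{0,i,j}\le 1$ directly to the first line; both routes collapse immediately to counting $\{\vec{i}\mid C_{u,\bsk}\vec{i}=0\}$ and invoking Proposition~\ref{prop:basic}.
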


\begin{proof}
We can rewrite equation~\eqref{eq:gainrepresentation} as
\begin{align*}
\Gamma_{u,\bsk}
&=\sum_{v\subseteq u}\#\bigl\{ \vec{i}\in
\{0,1\}^m\mid C_{u,\bsk}\vec{i}=0,
\nabla C_{u,\bsk}\vec{i}=\vec{v}[u]
\bigr\}(-1)^{|v|}\\
&\le\sum_{v\subseteq u}\#\bigl\{ \vec{i}\in
\{0,1\}^m\mid C_{u,\bsk}\vec{i}=0,
\nabla C_{u,\bsk}\vec{i}=\vec{v}[u]\bigr\}\\
&=\#\bigl\{ \vec{i}\in
\{0,1\}^m\mid C_{u,\bsk}\vec{i}=0\bigr\}\\
&=2^{m-\rank(C_{u,\bsk})}.
\end{align*}
The last step follows from Proposition~\ref{prop:basic} after noting that there is at least one
solution because $\vec{0}$ is a solution.
\end{proof}

\begin{corollary}\label{cor:thebound2}
Let $\bsx_i\in[0,1)^s$ for $i\in\ints_{2^m}$ be
a $(t,m,s)$-net in base $2$.
Then
$$
\Gamma_{u,\bsk} \le 2^{t+|u|-1}.
$$
\end{corollary}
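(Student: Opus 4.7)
The plan is to split into two cases according to whether $|\bsk|+|u|$ exceeds $m-t$, and in each case combine one of Corollary~\ref{cor:gainzero} or Corollary~\ref{cor:thebound} with a rank lower bound derived from the $(t,m,s)$-net property. The defining property of $t$ gives us that $C_{u',\bsk'}$ has full row rank $|\bsk'|$ whenever $|\bsk'|\le m-t$, for every non-empty $u'\subseteq 1{:}s$ and compatible $\bsk'$ (coordinates with $k'_j=0$ contribute no rows, which matches the conventional interpretation of the $E(\bsk,\bsc)$ equidistribution condition).

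For the first case, assume $|\bsk|+|u|\le m-t$. Since $C_{u,\bsk+\one}$ has exactly $|\bsk|+|u|$ rows, the defining property forces it to have full row rank. Corollary~\ref{cor:gainzero} then yields $\Gamma_{u,\bsk}=0$, which is trivially $\le 2^{t+|u|-1}$.

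For the second case, assume $|\bsk|+|u|>m-t$. Corollary~\ref{cor:thebound} gives $\Gamma_{u,\bsk}\le 2^{m-\rank(C_{u,\bsk})}$, so the task reduces to proving $\rank(C_{u,\bsk})\ge \min(|\bsk|,m-t)$. The key observation is that if $\bsk^*\le\bsk$ componentwise, then $C_{u,\bsk^*}$ is literally a submatrix of $C_{u,\bsk}$, obtained by keeping only the first $k^*_{r_j}$ rows of each block $C_{r_j}^{(k_{r_j})}$. Choosing any $\bsk^*\le \bsk$ with $|\bsk^*|=\min(|\bsk|,m-t)\le m-t$, the defining property makes $C_{u,\bsk^*}$ have full row rank $|\bsk^*|$, establishing the claimed lower bound on $\rank(C_{u,\bsk})$.

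To conclude, consider the two sub-cases. If $|\bsk|\le m-t$, then $\rank(C_{u,\bsk})\ge|\bsk|$, so $m-\rank(C_{u,\bsk})\le m-|\bsk|\le t+|u|-1$, the last inequality following because $|\bsk|+|u|>m-t$ and all quantities are integers, hence $|\bsk|\ge m-t-|u|+1$. If instead $|\bsk|>m-t$, then $\rank(C_{u,\bsk})\ge m-t$ and $m-\rank(C_{u,\bsk})\le t\le t+|u|-1$ since $|u|\ge 1$. In either situation $\Gamma_{u,\bsk}\le 2^{t+|u|-1}$, completing the proof. The only non-routine step is the submatrix/rank argument, and its main subtlety is being careful that reducing coordinates of $\bsk$ to zero does not break the applicability of the defining property, which is handled by the remark above about zero coordinates contributing no rows.
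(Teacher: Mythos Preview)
Your proof is correct and follows essentially the same approach as the paper's. The paper phrases its case split in terms of whether $C_{u,\bsk+\one}$ and $C_{u,\bsk}$ have full row rank, whereas you use the numerical conditions $|\bsk|+|u|\le m-t$ and $|\bsk|\le m-t$, but the underlying logic---invoke Corollary~\ref{cor:gainzero} in the first case, and in the second combine Corollary~\ref{cor:thebound} with a submatrix rank lower bound obtained from the net property via some $\bsk^*\le\bsk$---is identical.
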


\begin{proof}
If $C_{u,\bsk+1}$ has full row rank then
$\Gamma_{u,\bsk}=0$ by Corollary~\ref{cor:gainzero}.

Suppose next that $C_{u,\bsk}$  has full row
rank but $C_{u,\bsk+1}$ does not.
The matrix $C_{u,\bsk+\one}$ has $|u|+|\bsk|$
rows and this must be at least $m-t+1$
by the definition of $t$ for a $(t,m,s)$-net.
Because $C_{u,\bsk}$ has full row rank
we get $\rank(C_{u,\bsk})=|\bsk|$ and
then by Corollary~\ref{cor:thebound},
$$\Gamma_{u,\bsk}\le 2^{m-\rank(C_{u,\bsk})}
=2^{m-|\bsk|}\le 2^{t+|u|-1}.$$

The remaining case is that $C_{u,\bsk}$ does not
have full row rank. In that case $|\bsk|\ge m-t+1$.
The matrix $C_{u,\bsk}$ must have a subset of $m-t$ rows
defined by $C_{u,\bsk'}$ with $\bsk'\le \bsk$ componentwise
for which $C_{u,\bsk'}$ has full row rank.
Then by Corollary~\ref{cor:thebound},
$$\Gamma_{u,\bsk}\le 2^{m-\rank(C_{u,\bsk})}
\le 2^{m-\rank(C_{u,\bsk'})}=2^{t}\le 2^{t+|u|-1}. \qedhere$$
\end{proof}

Sobol' matrices are upper triangular
with ones on their diagonal. 
Corollary~\ref{cor:thebound2} does not
require upper triangular matrices
or ones on the diagonal. Those properties
are important but their benefit
comes through $t$.

The largest bounds on gain coefficients
come from the case where $C_{u,\bsk}$
has full rank but $C_{u,\bsk+1}$ does not.
Among these, the largest are the ones for
large~$|u|$.

\begin{remark}
The strategy above can be extended
to $(t,m,s)$-nets in base $p$ for prime
numbers $p$ to show that
$\max_{u,\bsk}\Gamma_{u,\bsk}\le p^{t+|u|-1}$.
That will not generally improve on the bound
$p^t((p+1)/(p-1))^{|u|}$ from \cite{snxs}.
Even for $p=3$ it brings improvements only for $|u|\le2$
and raises the bound for $|u|\ge3$.
\end{remark}

\section{$\Gamma$ is a power of $2$}\label{sec:pow2}

Here we prove that the upper bound is actually
tight, so that $\Gamma_{u,\bsk}$ is either $0$
or $2^{m-\rank(C_{u,\bsk})}$, making the maximal gain
a power of $2$ (because it is impossible to have every $\Gamma_{u,\bsk}=0$).
We need some further notation. For $w\subseteq u\subseteq1{:}s$
and $\bsk\in\natu_0^{|u|}$ let
$\bsk+\one_w$ be the vector $\bsk'\in\natu_0^{|u|}$
with $k'_j=k_j+1$ for $j\in w$ and $k'_j=k_j$ for $j\in u\setminus w$.
We then introduce a generalized gain coefficient
\begin{align}\label{eq:gengaincoef}
\Gamma_{u,\bsk}^w = \sum_{i\in\ints_{2^m}}
\one\{C_{u,\bsk}\vec{i}=0\}\prod_{j\in w}N_{0,i,j}.
\end{align}
As usual, an empty product is $1$.
Also the matrix with all the rows of $C_{u,\bsk+\one_w}$
that are not in $C_{u,\bsk}$ is denoted
$\nabla^wC_{u,\bsk}\in\{0,1\}^{|w|\times m}$.

\begin{lemma}\label{lem:gamukwzero}
If $w\ne\emptyset$ and $\rank(C_{u,\bsk+\one_w})-\rank(C_{u,\bsk})=|w|$
then $\Gamma_{u,\bsk}^w=0$.
\end{lemma}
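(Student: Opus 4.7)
The plan is to mimic the proof of Theorem~\ref{thm:gainrepresentation}, restricted to the coordinates in $w$, and then show that under the given rank hypothesis the resulting alternating sum telescopes to zero.

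First I would expand $\prod_{j\in w}N_{0,i,j}$ according to signs. Fix $\vec{i}$ with $C_{u,\bsk}\vec{i}=0$; this says that for every $j\in u$, the first $k_j$ bits of $x_{ij}$ agree with those of $x_{0j}=0$, so $M(x_{0j},x_{ij})\ge k_j$ and hence $N_{0,i,j}\in\{-1,+1\}$. The sign is determined by the $(k_j+1)$-th bit of $x_{ij}$, which is exactly the $j$-th coordinate of $\nabla^w C_{u,\bsk}\,\vec{i}$ (for $j\in w$). Writing $v=\{j\in w:(\nabla^wC_{u,\bsk}\vec{i})_j=1\}\subseteq w$, we obtain $\prod_{j\in w}N_{0,i,j}=(-1)^{|v|}$. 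Substituting into~\eqref{eq:gengaincoef} yields the representation
\begin{equation*}
\Gamma_{u,\bsk}^w = \sum_{v\subseteq w}(-1)^{|v|}\,
\#\bigl\{\vec{i}\in\{0,1\}^m\mid C_{u,\bsk}\vec{i}=0,\ \nabla^wC_{u,\bsk}\vec{i}=\vec{v}[w]\bigr\}.
\end{equation*}

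Next I would use the rank hypothesis to show every fiber in the last display has the same cardinality. Let $r=\rank(C_{u,\bsk})$. The kernel of $C_{u,\bsk}$ in $\{0,1\}^m$ has size $2^{m-r}$ by Proposition~\ref{prop:basic}. Restrict the linear map $\vec{i}\mapsto\nabla^wC_{u,\bsk}\vec{i}$ to this kernel. Its kernel inside $\ker(C_{u,\bsk})$ is precisely $\{\vec{i}:C_{u,\bsk+\one_w}\vec{i}=0\}$, which has size $2^{m-r-|w|}$ because the rank hypothesis gives $\rank(C_{u,\bsk+\one_w})=r+|w|$. By rank--nullity the image of the restricted map therefore has dimension $(m-r)-(m-r-|w|)=|w|$, so the restricted map is surjective onto $\{0,1\}^{|w|}$. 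Consequently each fiber has exactly $2^{m-r-|w|}$ elements.

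Substituting this common fiber size back in gives
\begin{equation*}
\Gamma_{u,\bsk}^w = 2^{m-r-|w|}\sum_{v\subseteq w}(-1)^{|v|}=0,
\end{equation*}
where the vanishing uses $w\ne\emptyset$. The only real content is the surjectivity step; everything else is either a restatement of the sign-decomposition already used in Theorem~\ref{thm:gainrepresentation} or a direct application of Proposition~\ref{prop:basic}. I do not anticipate a genuine obstacle—the main thing to get right is the bookkeeping that the ``new'' row of $\nabla^w C_{u,\bsk}$ indexed by $j\in w$ truly reads off the $(k_j+1)$-th bit of $x_{ij}$, which is immediate from $\vec{x}_{ij}=C_j\vec{i}$.
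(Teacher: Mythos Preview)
Your proof is correct and follows essentially the same approach as the paper's: both derive the representation $\Gamma_{u,\bsk}^w=\sum_{v\subseteq w}(-1)^{|v|}\#\{\vec{i}:C_{u,\bsk}\vec{i}=0,\ \nabla^wC_{u,\bsk}\vec{i}=\vec{v}[w]\}$, use the rank hypothesis to conclude that the restricted map $\nabla^wC_{u,\bsk}$ is surjective on $\ker C_{u,\bsk}$ so that every fiber has size $2^{m-\rank(C_{u,\bsk+\one_w})}$, and then invoke $\sum_{v\subseteq w}(-1)^{|v|}=0$. The paper is simply terser, deferring the sign decomposition to ``like Corollary~\ref{cor:gainzero},'' whereas you spell it out explicitly.
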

\begin{proof}
Because  $\rank(C_{u,\bsk+\one_w})-\rank(C_{u,\bsk})=|w|$,
the image of $\nabla^wC_{u,\bsk}\vec{i}$ for $\vec{i}\in\{0,1\}^m$
with $C_{u,\bsk}\vec{i}=0$ has rank $|w|$.
But $\nabla^wC_{u,\bsk}\vec{i}\in\{0,1\}^{|w|}$, so the image is the whole
space. Therefore for any $v\subseteq w$ the system of equations
$$
C_{u,\bsk}\vec{i}=0\quad \text{and}\quad\nabla^wC(u,\bsk)\vec{i}=\vec{v}[w]
$$
is consistent and has $2^{m-\rank(C_{u,\bsk+\one_w})}$ solutions.
The rest of the proof is like that in Corollary~\ref{cor:gainzero}.
\end{proof}

\begin{lemma}\label{lem:gamukwnotzero}
If $w\ne\emptyset$ and $\rank(C_{u,\bsk+\one_w})-\rank(C_{u,\bsk})<|w|$
then there exists a nonempty $v\subseteq w$ such that
$\prod_{j\in v}N_{0,i,j}=1$ for any $i\in\ints_{2^m}$ with $C_{u,\bsk}\vec{i}=0$.
\end{lemma}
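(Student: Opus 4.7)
The plan is to exploit the linear algebra underlying the rank defect, then translate it back to a statement about signs of the $N_{0,i,j}$ factors. The key observation is that if $\rank(C_{u,\bsk+\one_w})-\rank(C_{u,\bsk})<|w|$, then the $|w|$ rows of $\nabla^w C_{u,\bsk}$ are not linearly independent modulo the row space of $C_{u,\bsk}$ over $\ints_2$. So there must exist a nonempty $v\subseteq w$ together with a row vector $r$ in the row space of $C_{u,\bsk}$ such that
$$\sum_{j\in v}\nabla^w C_{u,\bsk}[j]=r\pmod{2},$$
where $\nabla^w C_{u,\bsk}[j]$ denotes the row of $\nabla^w C_{u,\bsk}$ corresponding to coordinate $j\in w$. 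This is the step I would prove first, and it is the only real linear algebra step; it is a standard consequence of the definition of rank.

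Next, I would take any $\vec{i}\in\{0,1\}^m$ with $C_{u,\bsk}\vec{i}=0$. Since $r$ lies in the row space of $C_{u,\bsk}$, we have $r\vec{i}=0\pmod 2$, and therefore
$$\sum_{j\in v}\nabla^w C_{u,\bsk}[j]\cdot\vec{i}=0\pmod 2.$$
Now I would identify the summands with bits of $\vec{x}_{ij}$: by construction $\vec{x}_{ij}=C_j\vec{i}$, and $\nabla^w C_{u,\bsk}[j]$ is precisely the $(k_j+1)$-th row of $C_j$, so $\nabla^w C_{u,\bsk}[j]\cdot\vec{i}$ is the $(k_j+1)$-th bit of $x_{ij}$, which I will call $\beta_j$.

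Finally, I would translate this back to the sign of $\prod_{j\in v}N_{0,i,j}$. Because $\vec{x}_{0j}=C_j\vec{0}=0$, the condition $C_{u,\bsk}\vec{i}=0$ forces the first $k_j$ bits of $x_{ij}$ to match the first $k_j$ bits of $x_{0j}=0$ for every $j\in u$, so $M(x_{0j},x_{ij})\ge k_j$ and $N_{0,i,j}\in\{-1,+1\}$. Moreover, $N_{0,i,j}=+1$ iff $\beta_j=0$ and $N_{0,i,j}=-1$ iff $\beta_j=1$, so $N_{0,i,j}=(-1)^{\beta_j}$. Combining,
$$\prod_{j\in v}N_{0,i,j}=(-1)^{\sum_{j\in v}\beta_j}=1,$$
which is what we wanted. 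The main obstacle is the first step of extracting $v$ from the rank deficit, but this is essentially routine; the rest is bookkeeping that matches bits of $C_j\vec{i}$ with the definition of $N_{0,i,j}$.
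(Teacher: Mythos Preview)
Your proof is correct and follows essentially the same route as the paper's: extract a nonempty $v\subseteq w$ from the rank defect via a linear dependence among the new rows modulo the row space of $C_{u,\bsk}$, multiply by $\vec{i}$ in the kernel of $C_{u,\bsk}$ to conclude $\sum_{j\in v}\beta_j=0\tmod 2$, and translate back to $\prod_{j\in v}N_{0,i,j}=1$. Your explicit identification $N_{0,i,j}=(-1)^{\beta_j}$ is a slightly tidier way to phrase the final step than the paper's parity-counting argument, but the substance is the same.
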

\begin{proof}
By hypothesis, there 
%
exist coefficients
$a_{j,\ell}$ for $1\le \ell\le k_j$ for $j\in u\setminus w$
and $1\le \ell\le k_j+1$ for $j\in w$ with at least
one $a_{j,k_j+1}=1$ for $j\in w$ such that
\begin{align}\label{eq:itslindependent}
\sum_{j\in u\setminus w}\sum_{\ell=1}^{k_j}a_{j,\ell}C_j(\ell,:\,)\oplus
\sum_{j\in w}\sum_{\ell=1}^{k_j+1}a_{j,\ell}C_j(\ell,:\,)=0
\end{align}
with  $C_j(\ell,:\,)\in\{0,1\}^m$ equal to row $\ell$ of~$C_j$.

We will show that
 $v =\{j\in w\mid a_{j,k_j+1}=1\}$
satisfies the conditions of the Lemma.
To do this we choose any $i\in\ints_{2^m}$ with $C_{u,\bsk}\vec{i}=0$
and let $\vec{e}=\nabla^wC_{u,\bsk}\vec{i}\in\{0,1\}^{|w|}$.
Multiplying both sides of \eqref{eq:itslindependent}
by  $\vec{i}$ gives
$$
\sum_{j\in w}a_{j,k_j+1}C_j(k_j+1,:\,) \vec{i}
=\sum_{j\in v} e_j=0 \tmod 2.
$$
Because the bits of $\vec{e}$ sum to zero in $\ints_2$
there must be an even number of them that equal $1$.
There are then an even number of $j\in v$
with $N_{0,i,j}=-1$ and then $\prod_{j\in v}N_{0,i,j}=1$.
\end{proof}

\begin{theorem}
$\Gamma_{u,\bsk}$ is either $0$ or $2^{m-\rank(C_{u,\bsk})}$.
\end{theorem}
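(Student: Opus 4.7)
The plan is to induct on $|w|$ for the generalized coefficient $\Gamma_{u,\bsk}^w$ defined in~\eqref{eq:gengaincoef}, eventually instantiating $w=u$ to recover $\Gamma_{u,\bsk}$. The base case $w=\emptyset$ is immediate: the empty product is $1$, so
\[
\Gamma_{u,\bsk}^{\emptyset}=\#\{\vec{i}\in\{0,1\}^m\mid C_{u,\bsk}\vec{i}=0\}=2^{m-\rank(C_{u,\bsk})}
\]
by Proposition~\ref{prop:basic}, using that $\vec{0}$ is always a solution.

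For the inductive step with $w\neq\emptyset$, I would split on the rank jump $\rank(C_{u,\bsk+\one_w})-\rank(C_{u,\bsk})$. If this equals $|w|$, then Lemma~\ref{lem:gamukwzero} gives $\Gamma_{u,\bsk}^w=0$, so the conclusion holds trivially. Otherwise the jump is strictly less than $|w|$, and Lemma~\ref{lem:gamukwnotzero} produces a nonempty $v\subseteq w$ such that $\prod_{j\in v}N_{0,i,j}=1$ for every $i$ with $C_{u,\bsk}\vec{i}=0$. Factoring this product out of the sum defining $\Gamma_{u,\bsk}^w$ yields
\[
\Gamma_{u,\bsk}^w=\sum_{i\in\ints_{2^m}}\one\{C_{u,\bsk}\vec{i}=0\}\prod_{j\in w\setminus v}N_{0,i,j}=\Gamma_{u,\bsk}^{w\setminus v}.
\]
Since $|w\setminus v|<|w|$ and the underlying $C_{u,\bsk}$ is unchanged, the inductive hypothesis forces $\Gamma_{u,\bsk}^{w\setminus v}$ to be either $0$ or $2^{m-\rank(C_{u,\bsk})}$, completing the step. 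Setting $w=u$ finishes the theorem.

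The main obstacle, which is what Lemmas~\ref{lem:gamukwzero} and~\ref{lem:gamukwnotzero} are designed to dispatch, is the dichotomy argument: either the fresh rows $\nabla^w C_{u,\bsk}$ are fully independent modulo the old rows (giving perfect cancellation as in Corollary~\ref{cor:gainzero}), or a nontrivial $\ints_2$-linear dependence forces a parity constraint on the signs $N_{0,i,j}$ that lets us trade a subset $v\subseteq w$ for the constant $1$. Once this dichotomy is in hand, the induction is purely bookkeeping: every reduction step strictly decreases $|w|$ without changing the ambient solution set, so the process terminates either at a zero value or at the base case, whose size equals the upper bound from Corollary~\ref{cor:thebound}, thereby showing that the bound is achieved exactly when it is not $0$.
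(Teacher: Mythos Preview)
Your proof is correct and follows essentially the same approach as the paper: induction on $|w|$ for the generalized coefficient $\Gamma_{u,\bsk}^w$, with the base case handled by Proposition~\ref{prop:basic} and the inductive step split via Lemmas~\ref{lem:gamukwzero} and~\ref{lem:gamukwnotzero}. Your observation that the base case is exactly $2^{m-\rank(C_{u,\bsk})}$ (rather than merely in $\{0,2^{m-\rank(C_{u,\bsk})}\}$) is in fact slightly sharper than the paper's phrasing, since $\vec{0}$ is always a solution.
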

\begin{proof}
We proceed by induction on $|w|$
to prove that $\Gamma^w_{u,\bsk}\in\{0,2^{m-\rank(C_{u,\bsk})}\}$
for all $w\subseteq u$.
The conclusion then follows because $\Gamma^u_{u,\bsk}=\Gamma_{u,\bsk}$.

We begin with $w=\emptyset$.
Then from the definition~\eqref{eq:gengaincoef} of generalized gain coefficients,
$$
\Gamma^\emptyset_{u,\bsk}
=\sum_{i\in\ints_{2^m}}\one_{C_{u,\bsk}\vec{i}=0}\prod_{j\in \emptyset}N_{0,i,j}
=\sum_{i\in\ints_{2^m}}\one_{C_{u,\bsk}\vec{i}=0}
\in\{0,2^{m-\rank(C_{u,\bsk})}\}
$$
by Proposition~\ref{prop:basic}.

Now we suppose that $\Gamma^w_{u,\bsk}\in
\{0,2^{m-\rank(C_{u,\bsk})}\}$
holds whenever $0\le|w|< r$ for some $r\le|u|$.
If $|w|=r$ and $\rank(C_{u,\bsk+\one_w})-\rank(C_{u,\bsk})=|w|$
then $\Gamma^w_{u,\bsk}=0$ for $w\ne\emptyset$
by Lemma~\ref{lem:gamukwzero}.

It remains to consider the case with $|w|=r$
and $\rank(C_{u,\bsk+\one_w})-\rank(C_{u,\bsk})<|w|$.
In this case $w$ is not empty and we let $v$ be
the non-empty subset of $w$ from Lemma~\ref{lem:gamukwnotzero}.
Then because $\prod_{j\in v}N_{0,i,j}=1$
$$
\Gamma_{u,\bsk}^w
=\sum_{i\in\ints_{2^m}}\one_{C_{u,\bsk}\vec{i}=0}\prod_{j\in w}N_{0,i,j}
=\sum_{i\in\ints_{2^m}}\one_{C_{u,\bsk}\vec{i}=0}\prod_{j\in w\setminus v}N_{0,i,j}
=\Gamma_{u,\bsk}^{w\setminus v}.
$$
Now $|w\setminus v|<|w|=r$
so we can apply the induction hypothesis.
\end{proof}

Since there are only two possibilities for $\Gamma_{u,\bsk}$
we are able to get a computationally advantageous
check for which of them holds.
\begin{corollary}\label{cor:the check}
$\Gamma_{u,\bsk}=2^{m-\rank(C_{u,\bsk})}$ if
and only if $\sum_{j\in u}C_j(k_j+1,:\,)\in\{0,1\}^m$
is in the row space of $C_{u,\bsk}$.
\end{corollary}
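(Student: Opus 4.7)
The plan is to rewrite $\Gamma_{u,\bsk}$ as an $\mathbb{F}_2$-character sum over the kernel of $C_{u,\bsk}$, after which the corollary reduces to the standard duality between row space and kernel.

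First I would unpack the sign $\prod_{j\in u} N_{0,i,j}$ in equation~\eqref{eq:gaineqn} for any $i$ with $C_{u,\bsk}\vec i=0$. Such an $i$ forces $x_{0j}$ and $x_{ij}$ to agree in their first $k_j$ base $2$ digits for every $j\in u$ (since $\vec x_{0}=0$), so $N_{0,i,j}\in\{-1,+1\}$ and its sign is determined by the $(k_j+1)$-th bit of $x_{ij}$, which equals $C_j(k_j+1,:\,)\vec i\tmod 2$. Thus $N_{0,i,j}=(-1)^{C_j(k_j+1,:\,)\vec i}$ and, writing $\bsa=\sum_{j\in u}C_j(k_j+1,:\,)\tmod 2\in\{0,1\}^m$,
$$
\prod_{j\in u}N_{0,i,j}=(-1)^{\bsa^{\tran}\vec i}.
$$
Combined with Theorem~\ref{thm:gainrepresentation}, this gives the clean representation
$$
\Gamma_{u,\bsk}=\sum_{\vec i\in V}(-1)^{\bsa^{\tran}\vec i},\qquad V=\bigl\{\vec i\in\{0,1\}^m\mid C_{u,\bsk}\vec i=0\bigr\}.
$$

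Next I would evaluate this character sum by splitting on whether the $\mathbb{F}_2$-linear functional $\vec i\mapsto \bsa^{\tran}\vec i$ vanishes identically on $V$. If $\bsa$ lies in the row space of $C_{u,\bsk}$, then $\bsa=\boldsymbol{\beta}^{\tran}C_{u,\bsk}$ for some $\boldsymbol{\beta}$, and $\bsa^{\tran}\vec i=\boldsymbol{\beta}^{\tran}(C_{u,\bsk}\vec i)=0$ for all $\vec i\in V$, so every term in the sum is $+1$ and by Proposition~\ref{prop:basic} the sum equals $|V|=2^{m-\rank(C_{u,\bsk})}$. If instead $\bsa$ is not in the row space, then by the standard $\mathbb{F}_2$ duality (row space of $C_{u,\bsk}$ equals annihilator of $\ker C_{u,\bsk}$) there exists $\vec i_0\in V$ with $\bsa^{\tran}\vec i_0=1$; translation by $\vec i_0$ is a bijection of $V$ that flips the sign of every summand, which forces $\Gamma_{u,\bsk}=-\Gamma_{u,\bsk}$ and hence $\Gamma_{u,\bsk}=0$.

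The only subtlety is making sure that working modulo $2$ in the exponent is legitimate: this is fine because the summand depends only on the parity of $\sum_{j\in u}C_j(k_j+1,:\,)\vec i$, so reducing $\bsa$ mod $2$ does not change $(-1)^{\bsa^{\tran}\vec i}$. Everything else is routine $\mathbb{F}_2$ linear algebra, and the two cases match the two possible values of $\Gamma_{u,\bsk}$ supplied by the preceding theorem, so no further case-analysis is required.
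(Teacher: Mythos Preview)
Your proof is correct. The key observation --- that on the kernel $V=\ker C_{u,\bsk}$ one has $\prod_{j\in u}N_{0,i,j}=(-1)^{\bsa^{\tran}\vec i}$ with $\bsa=\sum_{j\in u}C_j(k_j+1,:\,)\tmod 2$ --- is exactly right, and the character-sum evaluation via $\mathbb{F}_2$ duality (row space of $C_{u,\bsk}$ equals the annihilator of its kernel) handles both directions at once.

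The paper takes a longer road: for the forward direction it invokes Lemma~\ref{lem:gamukwnotzero} with $v=w=u$ (which is your sign identity in disguise), and for the converse it appeals to tightness of the inequality in Corollary~\ref{cor:thebound} to rule out solutions with odd $|v|$, then converts that into row-space membership. Your route is more economical, and in fact it yields an independent proof of the preceding theorem --- that $\Gamma_{u,\bsk}\in\{0,2^{m-\rank(C_{u,\bsk})}\}$ --- without the induction on $|w|$, the generalized gain coefficients $\Gamma^w_{u,\bsk}$, or Lemmas~\ref{lem:gamukwzero} and~\ref{lem:gamukwnotzero}. So your closing remark that ``no further case-analysis is required'' undersells the argument: the character-sum identity collapses essentially the whole inductive apparatus of Section~\ref{sec:pow2} into a two-line $\mathbb{F}_2$ computation. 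The paper's machinery is not wasted --- Lemma~\ref{lem:gamukwnotzero} is phrased for general $w\subseteq u$, which matches the inductive structure of Theorem~2 --- but for the corollary itself your direct approach is both shorter and more transparent.
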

\begin{proof}
First suppose that $\sum_{j\in u}C_j(k_j+1,:\,)$ is in
the row space of $C_{u,\bsk}$.
Then we can apply Lemma~\ref{lem:gamukwnotzero} with $v=w=u$
to get that $\prod_{j\in u}N_{0,i,j}=1$ for any $i\in\ints_{2^m}$ with $C_{u,\bsk}\vec{i}=0$.
Conversely, suppose that $\Gamma_{u,\bsk}=2^{m-\rank(C_{u,\bsk})}$.
Then from details of the proof of Corollary~\ref{cor:thebound},
\begin{align*}
&\sum_{v\subseteq u}\#\bigl\{ \vec{i}\in
\{0,1\}^m\mid C_{u,\bsk}\vec{i}=0,
\nabla C_{u,\bsk}\vec{i}=\vec{v}[u]
\bigr\}(-1)^{|v|}\\
&=\sum_{v\subseteq u}\#\bigl\{ \vec{i}\in
\{0,1\}^m\mid
C_{u,\bsk}\vec{i}=0, \nabla C_{u,\bsk}\vec{i}=\vec{v}[u]
\bigr\},
\end{align*}
which rules out having any solutions $\vec{i}\in\{0,1\}^m$ to
$$
C_{u,\bsk}\vec{i}=0, \nabla C_{u,\bsk}\vec{i}=\vec{v}[u]
$$
for any $v$ with an odd cardinality.
Therefore
$\sum_{j\in u}C_j(k_j+1,:\,)\vec{i}=0$
whenever $C_{u,\bsk}\vec{i}=0$ which then implies
that $\sum_{j\in u}C_j(k_j+1,:\,)$ is in the row space of $C_{u,\bsk}$.
\end{proof}

\section{Reduced upper bound}\label{sec:reduced}
From Corollary~\ref{cor:thebound} we have
$\Gamma_{u,\bsk}\le 2^{t+|u|-1}$.
It follows immediately that $\Gamma_{u,\bsk}\le 2^{t_u+|u|-1}$ because we could have formed
a net out of only $\bsx_{i,u}\subset[0,1)^{|u|}$.
In this section we show that it is possible
to improve that bound.


We need to use a second notion of the $t$
parameter specific to a subset $u\ne\emptyset$
of components of $\bsx_i$.
This notion is denoted $t^*_u$. We define it
below side by side with the prior $t_u$ to
make comparisons easier. We also need a
quantity $t_d$ for $1\le d\le s$ to describe
the quality of projected nets.  The
new and old quantities are
\begin{align*}
t &=
m+1-\min_{u\neq \emptyset, \bsk\in\natu_0^{|u|}}\bigl\{ |\bsk|
\mid
C_{u,\bsk}\ \mathrm{not\ of\ full\ rank}\, \bigr\},\\
t_d &=
m+1-\min_{u:|u|\leq d, \bsk\in\natu_0^{|u|}}\bigl\{ |\bsk|
\mid
C_{u,\bsk}\ \mathrm{not\ of\ full\ rank}\, \bigr\},\\
t_u &=
m+1-\min_{\bsk\in\natu_0^{|u|}}\bigl\{ |\bsk|
\mid
C_{u,\bsk}\ \mathrm{not\ of\ full\ rank}\, \bigr\},\quad\text{and}\\
t^*_u &=
m+1-\min_{\bsk\in\natu_0^{|u|}}\bigl\{ |\bsk|
\mid
C_{u,\bsk}\ \mathrm{not\ of\ full\ rank,\  \bsk\geq \one_u\ \mathrm{componentwise}}\, 
\bigr\}.
\end{align*}
Because $t_u^*$ adds constraints $k_j\geq 1$ for $j\in u$, we have $t_u^*\leq t_u$. 
Likewise, if $v\subseteq u$, then $t_v^*$ adds constraints $k_j\geq 1$ for $j\in v$ and $k_j=0$ for $j\in u \setminus v$, and we have $t_v^*\leq t_u$.
For any $\bsk$ that attains the minimum defined in $t_u$, define $v=\{j\in u\mid k_j\geq 1\}$ and $\bsk'\in \natu_0^{|v|}$ to be the nonzero entries of $\bsk$. Then $C_{u,\bsk}$ is also $C_{v,\bsk'}$ and $t_v^*\geq t_u$.
Therefore
$$t_u=\max_{v\subseteq u}t^*_v.$$ 
From the definitions of $t_d$ and $t$, we have
\begin{align*}
t_d&=\max_{u:|u|\leq d}t_u=\max_{u:|u|\le d}\max_{v\subseteq u}t^*_v=\max_{|v|\le d}t^*_v,\quad\text{and}\\
t&=\max_{1\le d\le s} t_d=\max_{u\neq \emptyset}t_u=\max_{v\neq \emptyset}t^*_v.
\end{align*}
Theorem~\ref{thm:t_u} below shows that
we can replace the bound $2^{t_u+|u|-1}$
by $2^{t_u^*+|u|-1}$.
It then follows that
$$\max_{u:|u|\leq d}\max_{\bsk\in\natu_0^{|u|}}\Gamma_{u,\bsk}\leq \max_{u:|u|\leq d}2^{t^*_u+|u|-1}\leq 2^{t_d+d-1}.
$$

For the next results we need to use the vector
$\one_u = (1,1,\dots,1)\in\ints^{|u|}$.

\begin{theorem}\label{thm:t_u}
For any $u\subseteq1{:}s$ where $C_{u,\one_u}$ has full row rank,
$$
\max_{\bsk\in\natu_0^{|u|}}\Gamma_{u,\bsk}\leq 2^{t^*_u+|u|-1}
\quad\text{and}\quad
\max_{v\subseteq u}\max_{\bsk\in\natu_0^{|v|}}\Gamma_{v,\bsk}= 2^{t^*_u+|u|-1}.
$$
\end{theorem}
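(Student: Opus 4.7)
The proof has three parts: (1) the upper bound $\Gamma_{u,\bsk} \le 2^{t_u^*+|u|-1}$ for every $\bsk$, (2) its extension from $u$ to arbitrary $v \subseteq u$, and (3) a matching attainment.

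\textbf{Part 1.} I would proceed along the lines of Corollary~\ref{cor:thebound2}, adapted to $t_u^*$. If $C_{u,\bsk+\one}$ has full row rank, Corollary~\ref{cor:gainzero} gives $\Gamma_{u,\bsk}=0$. Otherwise, growing a chain from $\one_u$ (full rank by hypothesis) one coordinate at a time toward $\bsk+\one$ must stop at some $\bsk^{(p)}$ with $C_{u,\bsk^{(p)}}$ full rank but $C_{u,\bsk^{(p)}+\one_{\{j\}}}$ not; since $\bsk^{(p)}+\one_{\{j\}} \ge \one_u$, the definition of $t_u^*$ gives $|\bsk^{(p)}|\ge m-t_u^*$, so $\rank(C_{u,\bsk+\one})\ge m-t_u^*$. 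If Corollary~\ref{cor:the check} fails, $\Gamma_{u,\bsk}=0$; otherwise it gives a nontrivial linear combination of the top rows $C_j(k_j+1,:)$ (with every coefficient equal to $1$) lying in the row space of $C_{u,\bsk}$, so $\rank(C_{u,\bsk+\one})\le\rank(C_{u,\bsk})+|u|-1$. Combining these yields $\rank(C_{u,\bsk})\ge m+1-t_u^*-|u|$ and $\Gamma_{u,\bsk}\le 2^{t_u^*+|u|-1}$.

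\textbf{Part 2.} Next I establish $t_v^*+|v|\le t_u^*+|u|$ for every $v \subseteq u$. Given a minimizer $\bsk^v$ for $t_v^*$, extend it to $\bsk^u\in\natu_0^{|u|}$ by setting $k^u_j=1$ for $j\in u\setminus v$ and $k^u_j=k^v_j$ for $j\in v$. Then $\bsk^u\ge\one_u$ and the row-dependency of $C_{v,\bsk^v}$ lifts to $C_{u,\bsk^u}$, forcing $|\bsk^u|\ge m+1-t_u^*$, which rearranges to the claim. Combined with Part~1 applied to $v$ (valid because $C_{v,\one_v}$ is a submatrix of $C_{u,\one_u}$), this gives $\Gamma_{v,\bsk}\le 2^{t_v^*+|v|-1}\le 2^{t_u^*+|u|-1}$ for all $v$ and $\bsk$.

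\textbf{Part 3 (main obstacle).} I would induct on $|u|$. Let $\bsk^*$ be a componentwise-minimal element of $\{\bsk\ge\one_u : C_{u,\bsk}\ \text{not full rank}\}$ with $|\bsk^*|=m+1-t_u^*$. Since $C_{u,\one_u}$ is full rank we have $\bsk^*\ne\one_u$, so some $k^*_j\ge 2$; componentwise minimality then forces $\rank(C_{u,\bsk^*})=|\bsk^*|-1$, and the unique (up to scalar) row-dependency has coefficient $1$ on $C_j(k^*_j,:)$ for every $j$ with $k^*_j\ge 2$. \emph{Case (a):} if the dependency also has coefficient $1$ on $C_j(1,:)$ for every $j$ with $k^*_j=1$, then $\sum_{j\in u}C_j(k^*_j,:)$ lies in the row space of $C_{u,\bsk^*-\one_u}$. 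Setting $\bsk=\bsk^*-\one_u$ (possibly with zero entries), uniqueness of the dependency together with the fact that every row it uses with nonzero coefficient is absent from $C_{u,\bsk}$ forces $C_{u,\bsk}$ to have full rank $|\bsk^*|-|u|$, so Corollary~\ref{cor:the check} yields $\Gamma_{u,\bsk}=2^{t_u^*+|u|-1}$. \emph{Case (b):} otherwise some $j_0$ with $k^*_{j_0}=1$ has coefficient $0$ on $C_{j_0}(1,:)$; the dependency then persists after deleting that row, so $C_{u\setminus\{j_0\},\bsk'}$ is not full rank, where $\bsk'$ denotes the restriction of $\bsk^*$ to $u\setminus\{j_0\}$. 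This gives $t^*_{u\setminus\{j_0\}}\ge t_u^*+1$, which combined with Part~2 forces $t^*_{u\setminus\{j_0\}}+|u\setminus\{j_0\}|=t_u^*+|u|$, and the induction hypothesis applied to $u\setminus\{j_0\}$ (whose full-rank hypothesis is inherited from $C_{u,\one_u}$) produces the attaining pair.
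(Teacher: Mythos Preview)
Your argument is correct, and the overall three-part structure matches the paper's, but your execution in Parts~1 and~3 is genuinely different.

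For Part~1, the paper splits into cases according to whether $C_{u,\bsk}$ itself has full row rank; when it does not, it pads $\bsk$ by $\one_v$ on the zero coordinates $v=\{j\in u\mid k_j=0\}$ so that $\bsk+\one_v\ge\one_u$, bounds $\rank(C_{u,\bsk+\one_v})\ge m-t^*_u$, and uses $\rank(C_{u,\bsk})\ge\rank(C_{u,\bsk+\one_v})-|v|$ together with $|v|\le|u|-1$. Your chain argument instead bounds $\rank(C_{u,\bsk+\one})$ and then invokes Corollary~\ref{cor:the check} to harvest the extra ``$-1$'' in the exponent. Both are clean; the paper's route needs only Corollary~\ref{cor:thebound}, while yours leans on the sharper power-of-two characterization.

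For Part~3, the paper avoids induction entirely: given a minimizer $\bsk^*$ and a nontrivial dependency with coefficients $a_{j,\ell}$, it sets $v=\{j\in u\mid a_{j,k^*_j}=1\}$ and $k^v_j=k^*_j$ for $j\in v$, observes (from minimality of $|\bsk^*|$) that $j\in u\setminus v$ forces $k^*_j=1$, and then applies Corollary~\ref{cor:the check} directly at $(v,\bsk^v-\one_v)$. This is exactly your Case~(a) carried out on the subset $v$ rather than on $u$, so the case split and the recursion collapse into a single step. Your inductive version is fine, though one phrase in Case~(a) overstates what is true and what is needed: the dependency may well have nonzero coefficients on rows that remain in $C_{u,\bsk^*-\one_u}$. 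What matters is that it has a nonzero coefficient on at least one row \emph{not} in $C_{u,\bsk^*-\one_u}$ (any $C_j(k^*_j,:)$ with $k^*_j\ge 2$ will do), so that by uniqueness no dependency can live entirely inside $C_{u,\bsk^*-\one_u}$.
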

\begin{proof}
First we prove that $\Gamma_{u,\bsk}\leq 2^{t^*_u+|u|-1}$. By the definition of $t^*_u$, if a matrix $C_{u,\bsk}$ with $\bsk\ge \one_u$ does not have full row rank, it must satisfy $|\bsk|\geq m+1-t^*_u$. The proof in the case where $C_{u,\bsk}$ has full row rank is like that in Corollary~\ref{cor:thebound2}. The remaining case is when $C_{u,\bsk}$ does not have full row rank.

Define $v=\{j\in u\mid k_j=0\}$. Then $|\bsk|+|v|\ge m-t^*_u+1$ because $C_{u,\bsk+\one_v}$ does not have full row rank and $\bsk+\one_v\ge \one_u$. 
The matrix $C_{u,\bsk+\one_v}$ must have a subset of $m-t^*_u$ rows
defined by $C_{u,\bsk'}$ with $\bsk'\le \bsk+\one_v$
for which $C_{u,\bsk'}$ has full row rank.
Then by Corollary~\ref{cor:thebound},
$$\Gamma_{u,\bsk}\le 2^{m-\rank(C_{u,\bsk})}\le 2^{m-\rank(C_{u,\bsk+\one_v})+|v|}\le
2^{t^*_u+|v|}\le 2^{t^*_u+|u|-1},$$
where the last inequality follows from $v$ being 
a proper subset of $u$ because $\bsk$ cannot be 0.

To prove the second statement, notice that for any $v\subseteq u$ and $\bsk\in\natu_0^{|v|}$ such that $\bsk\geq \one_v$ and $C_{v,\bsk}$ is row rank deficient, we can define $\bsk'\in\natu_0^{|u|}$ so that $k'_j=k_j$ for $j\in v$ and $k'_j=1$ for $j\in u\setminus v$. Then $\bsk'\geq \one_u$ and $C_{u,\bsk'}$ is row rank deficient as well because it is made of $C_{v,\bsk}$ with $|u|-|v|$ extra rows. It follows that $t_v^*\leq t_u^*+|u|-|v|$ and 
$$
\max_{\bsk\in\natu_0^{|v|}}\Gamma_{v,\bsk}\leq 2^{t^*_v+|v|-1}\leq  2^{t^*_u+|u|-1}.
$$

It remains to show that there exists $v\subseteq u$ and $\bsk^v\in\natu_0^{|v|}$ such that $\bsk^v\geq \one_v$ and $\Gamma_{v,\bsk^v}=2^{t^*_u+|u|-1}$. First we choose any $\bsk^*$ that attains the minimum $|\bsk|$ defined in $t^*_u$. Because $C_{u,\bsk^*}$ is not of full rank, its row vectors must be linearly dependent. That is, there exist coefficients
$a_{j,\ell}\in\{0,1\}$ for $1\le \ell\le k^*_j$ and $j\in u$
such that
\begin{align*}
\sum_{j\in u}\sum_{\ell=1}^{k^*_j}a_{j,\ell}C_j(\ell,:\,)=0\tmod 2.            
\end{align*}
Define $v=\{j\in u\mid a_{j,k^*_j}=1\}$ and $\bsk^v\in\natu_0^{|v|}$ such that $k_j^v=k^*_j$ for $j\in v$. Because $\bsk^*$ attains the smallest $|\bsk|$ among $\bsk\geq \one_u$, $a_{j,k^*_j}=1$ for all $j\in u$ such that $k_j^*\geq 2$. In other words, $j\in u\setminus v$ if and only if $k^*_j=1$ and $a_{j,1}=0$. Hence $|\bsk^v|=|\bsk^*|-|u\setminus v|$ and
\begin{align*}
\sum_{j\in v}C_j(k^v_j,:\,)+\sum_{j\in v}\sum_{\ell=1}^{k^v_j-1}a_{j,\ell}C_j(\ell,:\,)=0.
\end{align*}
Corollary~\ref{cor:the check} then implies
that $\Gamma_{v,\bsk^v-\one_v}=2^{m-\rank(C_{v,\bsk^v-\one_v})}$. 
Again, because $\bsk^*$ attains the smallest $|\bsk|$, $C_{v,\bsk^v-\one_v}$ has full row rank and $\rank(C_{v,\bsk^v-\one_v})=|\bsk^v|-|v|=|\bsk^*|-|u|$. 
Therefore
$$\Gamma_{v,\bsk^v-\one_v}=2^{m-\rank(C_{v,\bsk^v-\one_v})}=2^{m-|\bsk^*|+|u|}=2^{t^*_u+|u|-1}.\qedhere$$
\end{proof}

\begin{corollary}
If there exists $u\subseteq1{:}s$
with $\rank(C_{u,\one_u})<|u|$
then the maximal gain coefficient $\Gamma=2^m$.
Otherwise $\Gamma= 2^{t^*_{1:s}+s-1}$.
\end{corollary}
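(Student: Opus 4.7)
The plan is to split on the two clauses of the corollary; in both cases almost all the work is already done by earlier results.

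For the first clause, assume some $u\subseteq 1{:}s$ has $\rank(C_{u,\one_u})<|u|$. I would first note that Corollary~\ref{cor:thebound} already gives the trivial upper bound $\Gamma_{v,\bsk}\le 2^{m-\rank(C_{v,\bsk})}\le 2^m$ for every pair $(v,\bsk)$, so it suffices to exhibit a single gain coefficient equal to $2^m$. Since the $|u|$ rows $C_j(1,:\,)$, $j\in u$, are linearly dependent over $\ints_2$, some nonempty subset of them sums to zero: there is a nonempty $v\subseteq u$ with $\sum_{j\in v}C_j(1,:\,)=0$. Now take $\bsk=(0,\dots,0)\in\natu_0^{|v|}$: the matrix $C_{v,\bsk}$ is the empty $0\times m$ matrix whose row space is $\{0\}$, and the vector $\sum_{j\in v}C_j(k_j+1,:\,)=\sum_{j\in v}C_j(1,:\,)=0$ lies in that row space. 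Corollary~\ref{cor:the check} then yields $\Gamma_{v,\bsk}=2^{m-\rank(C_{v,\bsk})}=2^m$, so $\Gamma=2^m$.

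For the second clause, assume $\rank(C_{u,\one_u})=|u|$ for every nonempty $u\subseteq 1{:}s$. In particular this hypothesis is met for $u=1{:}s$, so the second conclusion of Theorem~\ref{thm:t_u} applies and gives
$$\max_{v\subseteq 1{:}s,\, v\ne\emptyset}\ \max_{\bsk\in\natu_0^{|v|}}\Gamma_{v,\bsk}=2^{t^*_{1:s}+s-1},$$
which is exactly $\Gamma$ by definition.

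There is essentially no obstacle; the only fussy point is reading Corollary~\ref{cor:the check} in the degenerate case $\bsk=(0,\dots,0)$, where the ``row space of $C_{v,\bsk}$'' collapses to $\{0\}$ and the check reduces to the very linear dependence that the hypothesis $\rank(C_{u,\one_u})<|u|$ provides. One should also check that the union of the two cases covers all possibilities, which is clear since they are logical negations of each other.
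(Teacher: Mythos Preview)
Your proposal is correct and follows essentially the same route as the paper's proof. The only cosmetic difference is in the first clause: the paper picks a \emph{minimal} $u$ with $C_{u,\one_u}$ rank-deficient (so that all its rows must participate in the dependency, giving $\sum_{j\in u}C_j(1,:\,)=0$ directly), whereas you start from an arbitrary such $u$ and pass to a subset $v$ whose first rows sum to zero; both then invoke Corollary~\ref{cor:the check} at $\bsk=0$ and Theorem~\ref{thm:t_u} for the second clause.
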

\begin{proof}
In the former case, we can choose the smallest size $u$ whose $C_{u,\one_u}$ is not full rank. Then the row vectors of $C_{u,\one_u}$ are linearly dependent, but any proper subset of them are linearly independent. Therefore $\sum_{j\in u} C_{j}(1,:\,)=0$. We can then apply Corollary~\ref{cor:the check} to $\Gamma_{u,0}$ and derive $\Gamma_{u,0}=2^m$, which is the largest $\Gamma_{u,\bsk}$ in view of Corollary~\ref{cor:thebound}.

In the latter case, the conclusion immediately follows by applying Theorem~\ref{thm:t_u} to $u=1{:}s$.
\end{proof}



\begin{example}
Here we consider a $(1,1,4)$-net
in base $2$, known as a shift net \cite{schm:1998}
because the columns of the first
generator matrix $C_1$ are shifted
to create the other generator matrices.
The top three rows of the generator
matrices $C_1$ through $C_4$ are
\begin{align*}
\begin{pmatrix}
0 &0 &0 &1\\
0 &1 &1& 0\\
0 &0 &1& 0\\
\end{pmatrix},
\begin{pmatrix}
0 &0 &1 &0\\
1 &1 &0& 0\\
0 &1 &0& 0\\
\end{pmatrix},
\begin{pmatrix}
0 &1 &0 &0\\
1 &0 &0& 1\\
1 &0 &0& 0\\
\end{pmatrix}\ \text{and}\ 
\begin{pmatrix}
1 &0 &0 &0\\
0 &0 &1 &1\\
0 &0 &0 &1\\
\end{pmatrix}.
\end{align*}
The fourth rows could be anything
without changing this example.
From $m=s=4$ and $t=1$ we get a gain
coefficient bound $\Gamma\le 2^{t+s-1}=16$.
However $t^*_{1{:}s}=0$ for this net
after observing that
$$
C_{1:4,\one_{1:4}}=\begin{pmatrix}
0 & 0 & 0 & 1\\
0 & 0 & 1 & 0\\
0 & 1 & 0 & 0 \\
1 & 0 & 0 & 0
\end{pmatrix}
$$
has full rank.  Therefore $\Gamma=2^{t^*_{1:s}+s-1}=8$. 
\end{example}
\section{Discussion}\label{sec:disc}

Our first contribution is to tighten the bounds on $\Gamma_{u,\bsk}$ and
hence also
the maximal gain $\Gamma = \max_{|u|>0}\max_{\bsk\in\natu_0^{|u|}}\Gamma_{u,\bsk}
$ for digital nets in base $2$ of which
the constructions of Sobol' \cite{sobo:1967}
and Niederreiter-Xing
\cite{niedxing96, niedxing96b}
are the most important.
Our second contribution is to show that gain coefficients for
base $2$ digital nets must be either $0$ or a power of $2$,
so the maximal correlation is a power of $2$.
Finally, a consequence of our results is a more efficient
algorithm for computing gain coefficients.


\section*{Acknowledgments}
This work was supported by the U.S.\ National Science Foundation under grant IIS-1837931.
\bibliographystyle{plain}
\bibliography{qmc}
\end{document}